\newtheorem{theorem}{Theorem}[section]
\newtheorem{lemma}[theorem]{Lemma}
\newtheorem{corollary}[theorem]{Corollary}
\theoremstyle{definition}
\newtheorem{definition}[theorem]{Definition}
\theoremstyle{remark}
\newtheorem{remark}[theorem]{Remark}
\numberwithin{equation}{section}
\begin{document}
\title[Quantum unique ergodicity and the number of nodal domains]{Quantum unique ergodicity and the number of nodal domains of eigenfunctions}
\author{Seung uk Jang}
\address{Department of Mathematics, University of Chicago, Chicago, IL 60637, USA}
\email{seungukj@uchicago.edu}
\author{Junehyuk Jung}
\address{Department of Mathematical Science, KAIST, Daejeon 305-701, South Korea}
\curraddr{School of Mathematics, IAS, Princeton, NJ 08540, USA}
\email{junehyuk@ias.edu}

\thanks{This work was supported by the National Research Foundation of Korea (NRF) grant funded by the Korea government(MSIP)(No. 2013042157) and by the National Science Foundation under agreement No. DMS-1128155. The first author was partially supported by the TJ Park Post-doc Fellowship funded by POSCO TJ Park Foundation. We thank P. Sarnak, S. Zelditch, A. Reznikov, J. Toth, S. Eswarathasan, M. Magee, and Y. Canzani for many helpful discussions and enlightening comments.}
\begin{abstract}
 We prove that the Hecke--Maass eigenforms for a compact arithmetic
triangle group have a growing number of nodal domains as the eigenvalue tends to $+\infty$. More generally the same is proved for eigenfunctions
on negatively curved surfaces that are even or odd with respect to a geodesic symmetry
and for which Quantum Unique Ergodicity holds.
\end{abstract}

\maketitle

\section{Introduction}
\subsection{Nodal domains of eigenfunctions on a surface}
Let $(M,g)$ be a smooth compact Riemannian surface without boundary and let $\{u_n\}$ be an orthonormal Laplacian eigenbasis ordered by the eigenvalue, i.e.,
\begin{align*}
-\Delta_g u_n &= \lambda_n^2 u_n\\
\langle u_n,u_m \rangle_M&=\delta_{nm}\\
0=\lambda_0 &<\lambda_1 \leq \lambda_2 \leq \ldots,
\end{align*}
where $\Delta_g$ is the Laplace--Beltrami operator on $M$. Here $\langle f,h \rangle_M=\int_M f\bar{h}dV_g$, where $dV_g$ is the volume form of the metric $g$. We assume throughout the paper that every eigenfunction is real valued. We denote by $Z_{u_n}$ the nodal set $\{x \in M~:~ u_n(x)=0\}$ of $u_n$ and by $\mathcal{N}(u_n)$ the number of nodal domains of $u_n$, where nodal domains are the connected components of $M \backslash Z_{u_n}$.

The purpose of this paper is to understand the growth of $\mathcal{N}(u_n)$ as $n$ tends to $+\infty$. Note that Courant's nodal domain theorem \cite{ch53} and Weyl law imply that $\mathcal{N}(u_n)= O(\lambda_n^2)$. However it is not true in general that the number of nodal domains necessarily grows with the eigenvalue. For instance, when $M=S^2$ (the standard sphere) or $M=T^2$ (the flat torus), there exists a sequence of eigenfunctions $\{u_{n_k}\}$ with $\lambda_{n_k}\to \infty$ that satisfy $\mathcal{N}(\phi_{n_k})\leq 3$ \cite{st,lewy,crit}.

We first state the main result of the paper.
\begin{theorem}\label{arith}
Let $\phi$ be a Hecke--Maass eigenform for an arithmetic triangle group with eigenvalue $\lambda$. Then we have $\lim_{\lambda \to +\infty} \mathcal{N}(\phi)=+\infty$.
\end{theorem}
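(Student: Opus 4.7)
The plan is to combine QUE for Hecke--Maass forms on arithmetic surfaces (Lindenstrauss) with the geodesic reflection symmetry intrinsic to any triangle-group quotient, in the spirit of Ghosh--Reznikov--Rudnick on the modular surface. An arithmetic triangle group $\Gamma$ lies as an index-two subgroup of its extended triangle group $\Gamma^*$, generated by reflections in the sides of a fundamental triangle; any coset representative $\sigma \in \Gamma^* \setminus \Gamma$ descends to an isometric involution of $M = \Gamma \backslash \mathbb{H}$ whose fixed-point set $\delta$ is a union of closed geodesics. Since $\sigma$ commutes with $\Delta_g$ and with every Hecke operator, and the joint Hecke spectrum is simple, one may take $\sigma^* \phi = \epsilon \phi$ with $\epsilon \in \{+1,-1\}$.

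The core step is to produce a growing number of sign changes, along $\delta$, of $\phi|_\delta$ in the even case ($\epsilon = +1$) and of the normal derivative $\partial_\nu \phi|_\delta$ in the odd case ($\epsilon = -1$, where $\phi$ already vanishes on $\delta$). For this I would (i) upgrade QUE to a restricted-QUE statement on $\delta$, in the style of Toth--Zelditch, giving a positive lower bound
\[
\int_\delta \phi^2 \, ds \gg \mathrm{length}(\delta)
\]
in the even case, and an analogous bound on $\lambda^{-1}\int_\delta (\partial_\nu \phi)^2 \, ds$ in the odd case (derived from QUE via a Rellich-type identity on a tubular neighborhood of $\delta$); (ii) separately show $\int_\delta \phi\, ds \to 0$ (resp.\ $\int_\delta \partial_\nu \phi\, ds \to 0$), and more generally $\int_\delta \phi\, \psi\, ds \to 0$ for any fixed smooth $\psi$, via subconvex bounds on period integrals of Hecke eigenforms or a direct QUE computation; (iii) combine (i) and (ii) by an elementary oscillation argument to force the sign-change count to tend to infinity. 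Each such sign change yields a transverse nodal crossing at $\delta$, and a topological argument on the quotient orbifold $M/\langle \sigma \rangle$ with Neumann (resp.\ Dirichlet) boundary conditions on $\delta$ for $\epsilon = +1$ (resp.\ $\epsilon = -1$) converts these into a growing lower bound on $\mathcal{N}(\phi)$.

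The main obstacle is step (i): extracting a quantitative restriction statement on $\delta$ from the weak-$*$ convergence provided by QUE. Standard restriction theorems of Burq--G\'erard--Tzvetkov type yield only upper $L^p$ bounds on $\phi|_\delta$, so a positive \emph{lower} bound must come from additional structure---either via microlocal defect measures that pick up the Liouville lift of $\delta$, or via an arithmetic amplification argument using the Hecke operators. A secondary subtlety is the topological step: one must ensure that each sign change on $\delta$ genuinely separates two distinct nodal domains of $M$, which requires a local analysis of $Z_\phi$ near $\delta$ (via Bers's theorem and unique continuation) to rule out pathological nodal configurations that could merge neighboring domains.
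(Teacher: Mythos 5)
Your global architecture matches the paper's: exploit the reflection $\sigma$ from the extended triangle group, use Hecke multiplicity one to make $\phi$ a $\sigma$-eigenfunction, invoke Lindenstrauss's QUE, and reduce counting nodal domains to counting sign changes on the fixed geodesic via a nodal-graph/Euler-inequality argument. Your step (i), the restriction lower bound $\int_\delta \phi^2\,ds \gg 1$ (and its odd analogue for $\partial_\nu\phi$), is exactly what the paper establishes via a Rellich identity and QUE (Corollary~\ref{lower}), so that part is not in fact the obstacle you flag.

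The genuine gap is in steps (ii)--(iii). Combining the $L^2$-restriction lower bound with $\int_\delta\phi\,\psi\,ds\to 0$ is precisely the classical Ghosh--Reznikov--Rudnick / Jung--Zelditch mechanism that the paper is written to replace, and neither of your two proposed routes to (ii) closes unconditionally. QUE governs weak-$*$ limits of $|\phi|^2$ and its microlocal lift; it says nothing directly about the signed quantity $\int_\delta\phi\,\psi\,ds$, so a ``direct QUE computation'' cannot yield (ii). The arithmetic route via period-integral subconvexity is what \cite{GRS2} carries out, and there the needed saving is only available under the Generalized Lindel\"of Hypothesis; moreover, to turn the $L^2$ lower bound into an $L^1$ lower bound that beats (ii) you also need to control $\|\phi\|_{L^\infty(\delta)}$ from above, which either loses a density-one exceptional set or adds further hypotheses. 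As written, your plan would prove Theorem~\ref{arith} only conditionally or for a density-one subsequence, not for the full sequence.

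The paper avoids period integrals altogether. From the Rellich identity and QUE it computes, for every $m$, the limit of the normalized derivative moments $\lambda^{-2m}\int_\delta|\partial_t^m(f\phi|_\delta)|^2\,dt\big/\int_\delta|f\phi|_\delta|^2\,dt$; these are the even moments of the probability density $\lambda\,|\widehat{f\phi|_\delta}(\lambda\xi)|^2\big/\|f\phi\|_{L^2(\delta)}^2$, and they converge to the moments of an explicit measure on $[-1,1]$ whose Fourier transform $(1-a)\cos t + a J_0(t)$ (respectively $J_1(t)/t$ in the odd case) is negative somewhere, hence the measure is not positive-definite. If $\phi$ did not change sign on $\delta$ one could choose $f\geq 0$ so that $f\phi|_\delta$ is single-signed, and Bochner's theorem would force every $|\widehat{f\phi|_\delta}|^2$, and hence the moment limit, to be positive-definite --- a contradiction. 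This mechanism is unconditional given QUE, applies to the whole sequence, and needs no Kuznecov formula, no $L$-function input, and no sup-norm improvement. You should replace your (ii)--(iii) with this Bochner/moment argument.
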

Note that there are $76$ arithmetic triangle groups \cite{MR0429744} which are divided into $18$ commensurable classes \cite{MR0463116}.
\begin{remark}
This result in the stronger form of a lower bound of $\gg_{\epsilon} \lambda^{\frac{1}{12}-\epsilon}$ for the number of nodal domains is obtained in \cite{GRS2}, however assuming the Generalized Lindel{\"o}f Hypothesis for a certain family of $L$-functions.
\end{remark}
Theorem \ref{arith} is a consequence of the below Theorem \ref{thm1} which considers the number of nodal domains when we have Quantum Unique Ergodicity(QUE). Note that arithmetic Quantum Unique Ergodicity theorem by Lindenstrauss \cite{lin06} asserts that QUE holds for Maass--Hecke eigenforms on these triangles. In order to state Theorem \ref{thm1}, we first fix $a \mapsto a(x,hD)$, a quantization of a symbol $a(x,\xi) \in C^\infty\left(T^*M\right)$, to a pseudo-differential operator. (We briefly review semi-classical analysis on a manifold in the Appendix, and refer the readers to \cite{zss} for detailed discussion on the subject.) We say QUE holds for the sequence of eigenfunctions $\{u_n\}_{n\geq 1}$ if we have
\begin{equation}\label{qeseq}
\lim_{n \to \infty}\left\langle a\left(x,\lambda_{n}^{-1}D\right) u_{n},u_{n}\right\rangle_M =\int_{S^*M} a(x,\xi) d\mu
\end{equation}
for any fixed symbol $a \in C^\infty \left(T^*M\right)$ of finite order. Here $d\mu$ is a normalized Liouville measure on the unit cotangent bundle $S^*M$. We often write $\mathrm{Op}(a)$ for an operator that acts on an eigenfunction $u$ with the eigenvalue $\lambda$ as $a(x,\lambda^{-1}D)$.
\begin{remark}
The classical notions of equidistribution of these ``Wigner measures'' \cite{snirel,cd22,zeld1} are concerned with \eqref{qeseq} for degree zero homogeneous symbols. We show in \S\ref{finite} that the degree zero homogeneous case implies \eqref{qeseq}. 
\end{remark}
\begin{remark}
For a compact smooth negatively curved Riemannian manifold, it is conjectured by Rudnick and Sarnak \cite{rs94} that QUE holds for any given orthonormal eigenbasis $\{u_{n}\}$.
\end{remark}
\begin{theorem}\label{thm1}
Let $M$ be a smooth compact Riemannian surface without boundary. Assume that there exists an orientation-reversing isometric involution $\tau:M \to M$ such that $\mathrm{Fix}(\tau)$ is separating. Let $\{u_n\}$ be an orthonormal basis of $L^2(M)$ such that each $u_n$ is a joint eigenfunction of the Laplacian and $\tau$. Assume that QUE holds for the sequence $\{u_n\}$. Then
\[
\lim_{n\to \infty} \mathcal{N}(u_{n}) = +\infty.
\]
\end{theorem}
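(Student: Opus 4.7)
The plan is to reduce the theorem to showing that a certain sign-change count on $\beta := \mathrm{Fix}(\tau)$ tends to infinity, and then to extract this from QUE. Since each $u_n$ is a joint eigenfunction of $\tau$, it is either $\tau$-even or $\tau$-odd, and I would handle the two parities in parallel. Writing $M \setminus \beta = M^+ \sqcup M^-$ with $\tau$ exchanging the halves, an even $u_n$ satisfies the Neumann condition $\partial_\nu u_n|_\beta = 0$, while an odd $u_n$ satisfies the Dirichlet condition $u_n|_\beta = 0$. Let $F_n := u_n|_\beta$ in the even case and $F_n := \lambda_n^{-1}\partial_\nu u_n|_\beta$ in the odd case, and let $N(u_n)$ denote the number of sign changes of $F_n$ along $\beta$.

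The first step is a topological lemma in the spirit of Ghosh--Reznikov--Sarnak: there exists a constant $C$ depending only on the genus of $M$ and the number of components of $\beta$ such that $\mathcal{N}(u_n) \ge N(u_n) - C$. In the odd case $\beta \subset Z_{u_n}$ already, and on each sign-constant arc of $\partial_\nu u_n|_\beta$ the function $u_n$ has a definite sign on the $M^+$-side collar of $\beta$, so adjacent arcs (separated by a sign change) lie in distinct Dirichlet nodal domains of $u_n|_{M^+}$, each of which pairs with its $\tau$-reflection to yield a distinct nodal domain on $M$. The even case is analogous: each sign change of $u_n|_\beta$ is a point where a nodal arc of $u_n$ crosses $\beta$ perpendicularly (by $\tau$-invariance), and an Euler-characteristic count on the planar graph $Z_{u_n} \cup \beta$ in $M$ converts crossings into distinct nodal domains.

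The analytic core is to show $N(u_n) \to \infty$. QUE, together with the Toth--Zelditch quantum-ergodic-restriction machinery, upgrades to equidistribution of the boundary $L^2$-mass:
\begin{equation*}
\int_\beta \psi\, F_n^2\, ds \;\longrightarrow\; c_\pm \int_\beta \psi\, ds \qquad (c_\pm > 0),
\end{equation*}
for every $\psi \in C^\infty(\beta)$. More sharply, the boundary Wigner measure of $F_n$ equidistributes on $T^*\beta$ and charges no mass at $\xi = 0$. Combined with the Burq--G\'erard--Tzvetkov restriction bound $\|F_n\|_{L^\infty(\beta)} = O(\lambda_n^{1/4})$, this should prevent $F_n$ from carrying a definite sign on any macroscopic arc for large $n$: otherwise $F_n$ would carry too much low-frequency Fourier mass on $\beta$, contradicting the no-atom-at-zero condition.

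The main obstacle I anticipate is precisely this last step. The naive sign-change argument -- assume $N(u_{n_k}) \le K$, extract converging sign-change points, test against a fixed smooth $\psi$ matching the eventual sign pattern to derive a contradiction -- is blocked by the fact that QUE controls only the quadratic pairings $\int_\beta \psi F_n^2\,ds$, while the linear pairings $\int_\beta \psi F_n\,ds$ need not be small (the parity-swapping extension trick across $\beta$ gives a trivial $0=0$ identity rather than cancellation). Overcoming this requires a semiclassical choice of test function $\psi_n$ adapted to the oscillation scale of $F_n$, and invoking the full quantum-ergodic-restriction theorem rather than merely its bulk $L^2$ form. Converting no-atom-at-zero equidistribution on $T^*\beta$ into a genuinely unbounded sign-change count is the technical heart of the proof.
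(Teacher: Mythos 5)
Your topological reduction is essentially the paper's: bound $\mathcal{N}(u_n)$ from below by a count of sign changes of $u_n|_\beta$ (even case) or of $\lambda_n^{-1}\partial_\nu u_n|_\beta$ (odd case) along a fixed segment $\beta\subset\mathrm{Fix}(\tau)$, using a planar nodal-graph/Euler-inequality argument. And your instinct to feed QUE through a Rellich/QER-type identity to control the quadratic pairings $\int_\beta \psi\,F_n^2\,ds$ is also the right move. So far, same route.

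The gap is the step you yourself flag as the ``technical heart'': converting boundary equidistribution into an actual sign-change count. You propose to look for ``a semiclassical choice of test function $\psi_n$ adapted to the oscillation scale of $F_n$'' and to invoke ``the full QER theorem,'' but you do not say how; as written this is not a proof. Worse, the heuristic you lean on --- that the boundary Wigner measure ``charges no mass at $\xi = 0$'' --- is not what obstructs sign-definiteness. The limiting boundary measure here is a convex combination $d\mu_a = \frac{1-a}{2}(\delta_{-1}+\delta_1) + \frac{a}{\pi}\,\mathbf{1}_{[-1,1]}(\xi)\frac{d\xi}{\sqrt{1-\xi^2}}$, whose density is perfectly finite and nonzero at $\xi = 0$, so ``no atom at zero'' carries no information. (Also note that Burq--G\'erard--Tzvetkov gives $\|u_n\|_{L^2(\beta)} = O(\lambda_n^{1/4})$, not an $L^\infty$ bound on $F_n$.)

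The missing idea, which is the paper's actual contribution, is a Bochner-theorem argument. Suppose for contradiction that $u_{n_k}$ is sign-definite on $\beta$. Take a nonnegative cutoff $f\in C_0^\infty(\beta)$ and set $\psi_k = f\,u_{n_k}|_\beta$; this is sign-definite, hence $|\widehat{\psi_k}(\xi)|^2$ is a positive-definite function of $\xi$ by Bochner's theorem, and so is the rescaled normalized density $h_k(\xi) = c_k\,\lambda_{n_k}\,|\widehat{\psi_k}(\lambda_{n_k}\xi)|^2$. Equivalently, the characteristic function $\int e^{it\xi}h_k(\xi)\,d\xi$ is nonnegative for every $k$. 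On the other hand, the Rellich identity applied to symbols $\chi(n/\delta)f^2(t)\,i\xi_n\sum_{j<m}\xi_t^{2j}$, together with QUE and a small power-saving estimate ($|\langle L u, u\rangle_\beta| = O(\lambda^{m+3/4})$ for degree-$m$ operators $L$, via BGT) for the commutator errors, gives the moment limits
\begin{equation*}
\lim_{k\to\infty}\lambda_{n_k}^{-2m}\int_\beta \bigl|\partial_t^m\psi_k\bigr|^2\,dt\,\Big/\!\int_\beta|\psi_k|^2\,dt \;=\; (1-a) + \frac{a}{\pi}\int_{-1}^1\xi^{2m}\frac{d\xi}{\sqrt{1-\xi^2}}
\end{equation*}
after passing to a subsequence on which $2\int f^2\,/\,\int|\psi_k|^2 \to a\in[0,1]$. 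These are exactly the even moments of $d\mu_a$; the odd moments vanish by symmetry; and $d\mu_a$ is moment-determinate (compact support). Moment convergence to a determinate law implies convergence of characteristic functions, so $\int e^{it\xi}h_k(\xi)\,d\xi \to (1-a)\cos t + a J_0(t)$, which is negative at $t=\pi$ for every $a\in[0,1]$. This contradicts the nonnegativity above. Without this positive-definiteness mechanism (or an equivalent), your proposal does not close; it correctly maps the terrain but stops at the cliff.
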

We say a function $f$ on $M$ is even (resp. odd) if $\tau f=f$ (resp. $\tau f=-f$). In order to prove Theorem \ref{thm1}, we first use a topological argument to bound the number of nodal domains of an even (resp. odd) eigenfunction from below by the number of sign changes (resp. the number of singular points) of the eigenfunction along $\mathrm{Fix}(\tau)$. Such an argument is first developed in \cite{GRS} and we review in \S\ref{sec1} in terms of the nodal graphs and Euler's inequality as in \cite{JZ1}. We then use Bochner's theorem and a Rellich type identity to deduce from QUE that even (resp. odd) eigenfunctions $\{u_n\}$ have a growing number of sign changes (resp. singular points) along $\mathrm{Fix}(\tau)$ as $n$ tends to $+\infty$. This is the main contribution of the paper, and we sketch the argument in the following section. 
\begin{remark}
In \cite{JZ1}, the same assertion has been obtained when $M$ is a negatively curved surface, but for a density one subsequence of $\{u_n\}$. The argument of \cite{JZ1} to detect a sign change of an eigenfunction $u_n$ on a curve $\beta$ is to compare
\[
\left|\int_\beta u_n(s) ds\right| ~~\text{and}~~\int_\beta |u_n(s)| ds.
\]
(See \cite{GRS,jung3,br,JZ1,mm,GRS2}, where such an idea is used to prove a lower bound for the number of sign changes in various contexts.) In order to bound $\|u_n\|_{L^1(\beta)}$ from below using H{\"o}lder's inequality, the authors use the Quantum Ergodic Restriction (QER) theorem \cite{tz1,dz} for the lower bound of $\|u_n\|_{L^2(\beta)}$ and the point-wise Weyl law with an improved error term \cite{jzBe} for the upper bound of $\|u_n\|_{L^\infty(\beta)}$. For the upper bound of the integral of $u_n$ over $\beta$, the authors use the Kuznecov sum formulae \cite{zcuz}. Note that the result of \cite{jzBe} requires a global assumption on the geometry of $M$ that it does not have conjugate points, which is satisfied if $M$ is negatively curved. Also note that in order to bound such quantities using QER theorem and Kuznecov sum formulae, it is necessary to remove a density $0$ subsequence.
\end{remark}
\subsection{Sketch of the proof: sign changes of even eigenfunctions}\label{Sketch}
The main step in the proof of Theorem \ref{thm1} is to show that all but finitely many $u_{n}$ have at least one sign change on any given fixed segment $\beta$ of $\mathrm{Fix}(\tau)$.

To simplify the discussion, let $\{\psi_n\}$ be a sequence of functions in $C_0^\infty ([0,1])$. Assume that for any fixed integer $m \geq 0$ we have
\begin{equation}\label{mom}
\lim_{n \to \infty} \int_0^1 \left|\frac{\partial^m \psi_n}{\partial s^m}(s)\right|^2 ds = a_{2m},
\end{equation}
for some positive real number $a_{2m}$. Let $h_n(\xi)=|g_n(\xi)|^2/\|g_n\|_2^2$, where $g_n(\xi)$ is the Fourier transform of $\psi_n$,
\[
g_n(\xi) = (2\pi )^{-\frac{1}{2}} \int_0^1 e^{i\xi s} \psi_n(s) ds.
\]
Assume that there exists a unique probability measure $d\mu(\xi)$ whose $2m$th moment is $a_{2m}/a_0$ and whose $(2m+1)$th moment is zero for any $m\geq 0$. Then \eqref{mom} implies that a sequence of probability measures $h_n(\xi) d\xi$ converges to $d\mu(\xi)$ in moments.

We claim that all but finitely many $\psi_n$ have at least one sign change on $(0,1)$ under the assumption that $d\mu(\xi)$ is not positive-definite, i.e., not a Fourier transform of a positive measure(Theorem \ref{prob3}). Assume for contradiction that there exists a subsequence $\{\psi_{n_k}\}$ of $\{\psi_n\}$ such that $\psi_{n_k}$ does not change sign on $(0,1)$ for all $k$. Then by Bochner's theorem, $\{h_{n_k}(\xi)\}$ is a sequence of positive-definite functions and it cannot converge in moments to a measure that is not positive-definite, contradicting the assumption that $d\mu(\xi)$ is not positive-definite.

Now let $f \in C_0^\infty (\beta)$ be a nonnegative function. Our aim is to apply the above argument to $\psi_n(s)=f(s)u_{n}|_{\beta}(s)$, when QUE holds for the sequence of eigenfunctions $\{u_{n}\}$. Note that it is not necessarily true that the limit
\[
\lim_{n \to \infty}\int_{\beta}|\psi_n(s)|^2 ds
\]
should exists. However, under the assumption that QUE holds for $\{u_{n}\}$, we may instead compute the limit (Theorem \ref{awe})
\begin{equation}\label{eqhaha}
\lim_{n \to \infty} \int_\beta \left|\psi_n(s)\right|^2 - \left|\frac{1}{\lambda_{n}^m}\frac{\partial^m \psi_n}{\partial s^m}(s)\right|^2 ds = 2(1- b_{2m}) \int_{\beta} f^2(s) ds
\end{equation}
for each fixed $m \geq 0$ with an explicit constant $0<b_{2m}\leq 1$ using the Rellich identity, as in the proof of the Quantum Uniquely Ergodic Restriction (QUER) theorem of \cite{ctz}.

We first deduce from \eqref{eqhaha} that (Corollary \ref{lower})
\[
\liminf_{n \to \infty} \int_\beta \left|\psi_n(s)\right|^2 ds \geq 2 \int_\beta f^2(s) ds,
\]
and so
\[
\limsup_{n \to \infty} 2 \int_\beta f^2(s) ds\left(\int_\beta \left|\psi_n(s)\right|^2 ds \right)^{-1}\leq 1.
\]
Assume for simplicity that, for some $0 \leq a\leq 1$, we have
\[
\lim_{n \to \infty} 2 \int_\beta f^2(s) ds\left(\int_\beta \left|\psi_n(s)\right|^2 ds \right)^{-1} = a.
\]
Then \eqref{eqhaha} implies that
\[
\lim_{n \to \infty} \int_\beta \left|\frac{1}{\lambda_{n}^m}\frac{\partial^m \psi_n}{\partial s^m}(s)\right|^2 ds \left(\int_\beta \left|\psi_n(s)\right|^2 ds \right)^{-1}= (1-a) + a b_{2m}
\]
and we may apply the argument to
\[
h_n(\xi) = \lambda_{n} \left|\widehat{\psi_n}(\lambda_{n} \xi)\right|^2\left(\int_\beta \left|\psi_n(s)\right|^2 ds \right)^{-1}
\]
to conclude that all but finitely many $u_{n}$ have at least one sign change on $\beta$, by verifying that the unique measure having $(1-a)+a b_{2m}$ as the $2m$th moment and $0$ as the $(2m+1)$th moment is not positive-definite for any given $0 \leq a \leq 1$. This implies that the number of sign changes of $u_{n}$ along $\mathrm{Fix}(\sigma)$ tends to $+\infty$ as $n \to \infty$ (Theorem \ref{thm2}).

\section{\texorpdfstring{$L^p$}{Lp} estimates for the restriction to a curve of derivatives of eigenfunctions}
Let $u$ be a Laplacian eigenfunction with the eigenvalue $\lambda$. Let $L$ be a degree $m$ linear differential operator on $M$, i.e., for any coordinate patch $(U,p)$ there exists smooth functions $a_\alpha \in C^\infty (\mathbb{R}^n)$ (in which $a_\alpha\not\equiv 0$ for some $\alpha$ with $|\alpha|=m$) 
such that for any $\phi,\psi \in C_0^\infty (U)$ and for each $f \in C^\infty (M)$,
\[
\phi L(\psi f)=\phi p^* \sum_{|\alpha|\leq m} a_\alpha(x)\partial^\alpha \left(p^{-1}\right)^* (\psi f).
\]
Recall that
\begin{equation}\label{triv2}
\sup_{x \in M}\left|Lu(x)\right| =O\left(\lambda^{m+\frac{1}{2}}\right),
\end{equation}
which is a consequence of the generalization of remainder estimate for spectral function by Avakumovic--Levitan--H\"{o}rmander to that for the derivatives of spectral function\cite{xu}. Denoting by $\langle f, g\rangle_\beta = \int_\beta f(s)\overline{g(s)}ds$, \eqref{triv2} implies that
\begin{equation}\label{triv1}
|\langle L u, u\rangle_{\beta}| \ll_\beta \sup_{x \in M}\left|Lu(x)\right|\sup_{x \in M}\left|u(x)\right|= O\left(\lambda^{m+1}\right)
\end{equation}

In the proof of  Theorem \ref{awe}, we need an improvement over \eqref{triv1}, and we achieve an improvement by combining the $L^2$ eigenfunction restrictions estimates along curves due to Burq, G{\'e}rard, and Tzvetkov \cite{bgt} and \eqref{triv2}.
\begin{lemma}\label{lem}
For any fixed degree $m$ differential operator $L$, we have
\begin{equation*}
|\langle L u, u\rangle_{\beta}| =O\left(\lambda^{m+\frac{3}{4}}\right).
\end{equation*}
\end{lemma}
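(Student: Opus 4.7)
The plan is to improve the trivial bound \eqref{triv1} by applying Hölder's inequality asymmetrically rather than placing both factors in $L^\infty$. The Avakumović--Levitan--Hörmander-type estimate \eqref{triv2} already captures the correct growth for $Lu$ in $L^\infty$; the slack in \eqref{triv1} comes entirely from bounding $u$ in $L^\infty$, for which on a surface one has only $O(\lambda^{1/2})$. Trading that $L^\infty$ bound for an $L^2$ restriction bound along the curve $\beta$ is what should gain the desired $\lambda^{1/4}$.

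Concretely, by Hölder and then Cauchy--Schwarz,
\[
|\langle Lu, u\rangle_\beta| \leq \|Lu\|_{L^\infty(\beta)}\|u\|_{L^1(\beta)} \leq |\beta|^{1/2}\,\|Lu\|_{L^\infty(\beta)}\,\|u\|_{L^2(\beta)}.
\]
For the first factor I would invoke \eqref{triv2} applied to $Lu$, giving $\|Lu\|_{L^\infty(\beta)} \leq \sup_{x\in M}|Lu(x)| = O(\lambda^{m+1/2})$. For the second factor I would apply the $L^2$ restriction estimate of Burq--Gérard--Tzvetkov \cite{bgt}: on a two-dimensional Riemannian manifold, any smooth curve $\beta$ satisfies $\|u\|_{L^2(\beta)} = O(\lambda^{1/4})$. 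Multiplying these together yields $O(\lambda^{m+1/2})\cdot O(\lambda^{1/4}) = O(\lambda^{m+3/4})$, as claimed.

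I do not expect a serious obstacle here: the proof is a direct concatenation of two off-the-shelf inputs. The only issue to verify is that $\beta$ is admissible for the BGT restriction theorem, but since $\beta$ is taken to be a smooth arc (in the application, of $\mathrm{Fix}(\tau)$), this is automatic. In particular, one does not need to distinguish between geodesic and non-geodesic segments, because the $L^2$ restriction bound on a surface is the same $O(\lambda^{1/4})$ in both cases, unlike the finer $L^p$ restriction theorems for $p>4$.
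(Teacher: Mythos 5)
Your proof is correct and follows exactly the paper's argument: Hölder to pull $Lu$ out in $L^\infty$ via \eqref{triv2}, then Cauchy--Schwarz on $\beta$ to convert $\|u\|_{L^1(\beta)}$ into the Burq--Gérard--Tzvetkov $L^2$ restriction bound $O(\lambda^{1/4})$, multiplying to $O(\lambda^{m+3/4})$. No meaningful difference from the paper's proof.
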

\begin{proof}
By H{\"o}lder's inequality,
\[
|\langle L u, u\rangle_{\beta}| \leq \sup_{x \in M}\left|Lu(x)\right| \|u\|_{L^1(\beta)}.
\]
From \cite{bgt}, we have $\|u\|_{L^2(\beta)}=O\left(\lambda^{\frac{1}{4}}\right)$, hence
\[
\|u\|_{L^1(\beta)}\leq l(\beta)^{\frac{1}{2}}\|u\|_{L^2(\beta)} = O_{\beta} \left(\lambda^{\frac{1}{4}}\right).
\]
Therefore by \eqref{triv2}, we conclude
\[
|\langle L u, u\rangle_{\beta}|= O\left(\lambda^{m+\frac{3}{4}}\right).\qedhere
\]
\end{proof}
Since we only need any power saving over $O\left(\lambda^{m+1}\right)$ in \eqref{triv1} in our proof, it is unnecessary to optimize our bound in Lemma \ref{lem}. The optimal upper bound for $\|Lu\|_{L^2(\beta)}$ is $O\left(\lambda^{m+\frac{1}{4}}\right)$ which is sharp when $L=1$ and $M$ is the standard sphere $S^2$. Note that when $L$ corresponds to a normal derivative along $\beta$, the bound can be improved to $O(1)$ using second-microlocalization techniques, due to \cite{cth}.
\section{Rellich type analysis when QUE holds: even eigenfunctions}
In this section, we prove \eqref{eqhaha} with explicit constants using the Rellich identity assuming QUE.
\begin{theorem}\label{awe}
Assume that QUE holds for the sequence of even eigenfunctions $\{u_{n}\}$. Fix a segment $\beta \subset\mathrm{Fix}(\tau)$. For any fixed real valued function $f \in C_0^\infty (\beta)$ and for any fixed non-negative integer $m$, we have
\begin{align*}
&\lim_{k \to \infty}\int_\beta |f(t)u_{n}(t)|^2 dt - \lambda_{n}^{-2m}\int_{\beta} \left|\partial_t^{m}(f(t)u_{n}(t))\right|^2 dt\\
=&2\left(1-\frac{1}{\pi}\int_{-1}^1 \xi^{2m} \frac{d\xi}{\sqrt{1-\xi^2}}\right)\int_\beta f^2(t) dt.
\end{align*}
\end{theorem}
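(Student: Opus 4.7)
My plan is to reduce the target to a clean form via the Leibniz rule, convert the resulting boundary integral on $\beta$ to a bulk integral on $M^+$ (one of the two components of $M\setminus\mathrm{Fix}(\tau)$) by a Rellich-type identity that crucially uses $\partial_n u_n|_\beta = 0$ (valid since $u_n$ is $\tau$-even), and then evaluate the limit via QUE on $M$. Throughout I write $h_n = \lambda_n^{-1}$.

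First, Leibniz gives $\partial_t^m(fu_n) = f\,\partial_t^m u_n + \sum_{j\geq 1}\binom{m}{j}(\partial_t^j f)(\partial_t^{m-j}u_n)$. Each cross term, when squared and multiplied by $h_n^{2m}$, contains strictly fewer than $2m$ derivatives of $u_n$; Lemma \ref{lem} applied to the relevant composition $L=\partial_t^m\circ\partial_t^{m-j}$ bounds every such contribution by $O(h_n^j)=o(1)$ for $j\geq 1$. Hence the target quantity coincides, modulo $o(1)$, with $I_m(n) := \int_\beta f^2 u_n^2\,dt - h_n^{2m}\int_\beta f^2(\partial_t^m u_n)^2\,dt$, and I proceed to compute $\lim_n I_m(n)$.

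The heart of the argument is the Rellich conversion. In Fermi coordinates $(t,n)$ about $\beta$ with $F$ a smooth $\tau$-invariant extension of $f$, the eigenvalue equation $-h_n^2\Delta u_n = u_n$ combined with $\partial_n u_n|_\beta = 0$ yields the leading-order relation $(1-(h_n D_t)^2)u_n|_\beta = (h_n D_n)^2 u_n|_\beta + O(h_n)u_n|_\beta$. Iterating $m$ times and applying the algebraic identity $1-\xi_t^{2m} = \sum_{k=1}^m\binom{m}{k}(-1)^{k+1}\xi_n^{2k}$ on the characteristic variety $\{\xi_t^2+\xi_n^2=1\}$, followed by integration by parts in $t$, gives
\[
I_m(n) = \sum_{k=1}^m \binom{m}{k}(-1)^{k+1}\int_\beta F^2\,u_n\,(h_n D_n)^{2k}u_n\,dt + o(1).
\]
Each boundary integral $\int_\beta F^2 u_n(h_n D_n)^{2k}u_n\,dt$ is then converted, via iterated Green's identity on $M^+$ (using $\partial_n u_n|_\beta=0$ and the eigenvalue equation), to a bulk inner product $\langle\mathrm{Op}(a_k)u_n,u_n\rangle_{M^+}$ where $a_k(x,\xi)$ is a $\tau$-invariant semiclassical symbol whose principal part near $\beta$ is proportional to $F^2(x)\xi_n^{2k}$. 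Since $u_n$ is $\tau$-even and $a_k$ is $\tau$-invariant, $\langle\mathrm{Op}(a_k)u_n,u_n\rangle_{M^+} = \tfrac{1}{2}\langle\mathrm{Op}(a_k)u_n,u_n\rangle_M$, and QUE then gives $\tfrac{1}{2}\int_{S^*M}a_k\,d\mu$ as the limit. In Fermi coordinates with $(\xi_t,\xi_n)=(\cos\theta,\sin\theta)$ on the unit cosphere, the Liouville integral reduces to $\tfrac{2}{\pi}\int_\beta f^2(t)\,dt\cdot\int_{-1}^1(1-\xi^2)^{k-1/2}\,d\xi$, and summing via the same binomial identity yields $\tfrac{2}{\pi}\int_\beta f^2\,dt\cdot\int_{-1}^1(1-\xi^{2m})(1-\xi^2)^{-1/2}\,d\xi = 2(1-b_{2m})\int_\beta f^2\,dt$.

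The main technical difficulty is the Rellich conversion: the iterated integration by parts in the normal direction produces error contributions from (i) Fermi metric curvature terms (each $O(h_n)$), (ii) commutators between $F^2$ and $(h_n D_n)^{2k}$, and (iii) the normal cutoff needed to localize the identity to a tubular neighborhood of $\beta$. All such errors must be absorbed into the $o(1)$ remainder, for which Lemma \ref{lem} combined with the Burq--G\'erard--Tzvetkov $L^2$-restriction estimate $\|u_n\|_{L^2(\beta)}=O(\lambda_n^{1/4})$ is precisely tailored. The explicit factor $2$ in the final formula reflects the constructive interference along $\mathrm{Fix}(\tau)$ inherent to even eigenfunctions, exactly as in the standard QUER theorem.
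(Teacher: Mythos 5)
Your high-level architecture matches the paper's: Fermi coordinates, a Rellich-type identity, QUE on the bulk side, and the Burq--G\'erard--Tzvetkov restriction bound to absorb lower-order commutator terms. The Leibniz preprocessing at the start is essentially the same as the paper's disposal of the term $(-1)^{m-1}\lambda^{-2m} f[f,\partial_t^{2m}]u|_\beta$ via Lemma \ref{lem}, and your closing Liouville computation is correct. However, the middle of your argument diverges from the paper's in two ways, one cosmetic and one substantive.

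The cosmetic difference is the algebraic decomposition of $1-\xi_t^{2m}$ on the cosphere. You expand in powers of $\xi_n^2$ via the binomial identity $1-\xi_t^{2m}=\sum_{k=1}^m\binom{m}{k}(-1)^{k+1}\xi_n^{2k}$ and then propose to run $m$ separate bulk conversions, one for each $\xi_n^{2k}$. The paper instead factors $1-\xi_t^{2m}=(1-\xi_t^2)\sum_{k=0}^{m-1}\xi_t^{2k}=\xi_n^2\sum_{k=0}^{m-1}\xi_t^{2k}$ and uses a \emph{single} Rellich identity with $T=\mathrm{Op}\bigl(\chi(n/\delta)f^2(t)\,i\xi_n\sum_{k=0}^{m-1}\xi_t^{2k}\bigr)$, i.e.\ a symbol linear in $\xi_n$. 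Because $\partial_n T$ then supplies exactly one extra factor of $\xi_n$, the whole desired boundary quantity drops out of one application of Green's formula; no iteration is needed. Your binomial route would, if carried out, require you to handle the boundary term $\int_\beta F^2\,u_n\,(h_nD_n)^{2k}u_n\,dt$ for each $k$, which means either a Rellich identity with $T_k$ of symbol $\propto\xi_n^{2k-1}$ (higher normal order, messier commutators) or a genuine iteration of Green's identity with more lower-order boundary debris to control. Both are doable, but the paper's choice is the one that makes the computation short.

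The substantive issue is your claim that $\int_\beta F^2\,u_n\,(h_nD_n)^{2k}u_n\,dt$ is converted to a bulk inner product $\langle\mathrm{Op}(a_k)u_n,u_n\rangle_{M^+}$ with $a_k$ ``proportional to $F^2(x)\xi_n^{2k}$ near $\beta$.'' As stated this cannot be right: any symbol of the form $\chi(n/\delta)F^2\xi_n^{2k}$ localized to a tubular neighborhood of width $\delta$ produces, via QUE, a Liouville integral that tends to $0$ as $\delta\to 0$, so it cannot equal a fixed nonzero boundary quantity. What the Rellich identity actually produces on the bulk side is the commutator $[-\Delta_g,T_k]$, whose principal symbol carries the Poisson-bracket factor $\chi'(n/\delta)/\delta$ --- see the term $\mathrm{Op}\bigl(\tfrac{\alpha_1}{\delta}b_\alpha\chi'(n/\delta)f^2\xi_n^{\alpha_1}\xi_t^{\alpha_2+2k}\bigr)$ in the paper's proof --- and it is precisely this $\chi'(n/\delta)/\delta$ approximate-identity structure that survives as a measure concentrated on $S^*_\beta M$ when $\delta\to 0$. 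You note the cutoff as a ``technical difficulty,'' but in fact the $\delta^{-1}$-weighted derivative of the cutoff \emph{is} the mechanism by which a bulk integral reproduces a boundary integral; a bounded $\tau$-invariant symbol $a_k$ supported near $\beta$, as in your sketch, does not do this. Once the commutator is written out correctly, the $\delta\to 0$ limiting argument (and the order-of-limits issue it creates with $\lambda\to\infty$) is the real work, and it is here that the QUE hypothesis, Lemma \ref{lem}, and the remainder estimates \eqref{triv2} enter in concert. Your $\tau$-symmetry step (halving the bulk integral from $M$ to $M^+$) is a legitimate alternative to the paper's replacement of $\chi'$ by the one-sided cutoff $\chi_0$, and your final Liouville computation agrees with the paper's; but the conversion step as written needs to be reformulated around the commutator symbol, not a compactly supported bounded symbol.
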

\begin{proof}
We drop the subscript $n$ in $u_n$ and $\lambda_n$ for simplicity.

Let $(t,n)$ be Fermi normal coordinates in a small tubular neighborhood $U_\epsilon$ of $\beta$ near a point $x_0 \in \beta$. Let $p:U_\epsilon \to \mathbb{R}^2$ be the coordinate chart. We may assume that
\[
U=U_\epsilon = p^{-1}\left(\{(t,n)~|~t\in V,~|n|<\epsilon\}\right)
\]
in these coordinates, where $V \subset \mathbb{R}$ is a coordinate chart that contains $x_0$. Let $(t,n,\xi_t,\xi_n)$ be the local coordinates of $T^*(U)$ under the identification
\begin{align*}
 \mathbb{R}^2& \to  T_{x=(t,n)}^*(U)\\
 (\xi_t,\xi_n) &\mapsto \xi_t dt+\xi_n dn.
\end{align*}
We consider the standard quantization in this coordinates, i.e., for any given symbol $a(t,n,\xi_t,\xi_n)$ of finite order, we let
\begin{multline*}
\mathrm{Op}(a)u(t_0,n_0)\\
=\frac{\lambda^n}{(2\pi)^n}\int_{p(U)\times \mathbb{R}^2} e^{\lambda i ((t_0-t)\xi_t+(n_0-n)\xi_n)}a(t_0,n_0,\xi_t,\xi_n)u(t,n) dtdnd\xi_t d\xi_n.
\end{multline*}
For example if $a(t,n,\xi_t,\xi_n)=\sum_{|\alpha|\leq N}a_\alpha(t,n)\xi_t^{\alpha_1}\xi_n^{\alpha_2}$, then
\[
\mathrm{Op}(a)u=\sum_{|\alpha|\leq N} a_\alpha(t,n)\left(\frac{\partial_t}{i\lambda}\right)^{\alpha_1}\left(\frac{\partial_n}{i\lambda}\right)^{\alpha_2}u .
\]
Let $U_{-}\subset U$ be given by
\[
U_{-}=\{(t,n) \in U~|~ n<0\}.
\]
For any pseudo-differential operator $T$ on $M$, from Green's formula,  we have
\begin{equation}\label{green}
\langle \Delta_g T u,u\rangle_{U_-}-\langle T u, \Delta_g  u\rangle_{U_-}=\langle \partial_n T u|_\beta,u|_\beta\rangle_\beta -\langle T u|_\beta, \partial_n  u|_\beta\rangle_\beta
\end{equation}
Since $u$ is an eigenfunction, $\langle T u, \Delta_g  u\rangle_{U_-}=\langle T \Delta_g u,   u\rangle_{U_-}$. Also since we are assuming that $u$ is even, $\langle T u|_\beta, \partial_n  u|_\beta\rangle_\beta=0$. Therefore we have the Rellich identity:
\begin{equation}\label{rel}
\frac{1}{\lambda}\langle \lbrack-\Delta_g, T\rbrack u,u\rangle_{U_-} = -\frac{1}{\lambda}\langle \partial_n T u|_\beta,u|_\beta\rangle_\beta,
\end{equation}
where $\lambda^{-1}$ is the normalizing factor.

Now fix $\chi \in C_0^\infty(\mathbb{R})$ such that
\begin{equation*}
\chi(x)= \left\{
\begin{array}{cl} 0 & \hspace{10mm}\text{if } |x|\geq 1,\\
1 & \hspace{10mm}  \text{if }|x|\leq \frac{1}{2}.
\end{array} \right.
\end{equation*}
We define a symbol supported near $\beta$ for $0<\delta<\epsilon$,
\[
a_{\delta,m}(x,\xi) = \chi\left(\frac{n}{\delta}\right)f^2(t) i\xi_n \sum_{k=0}^{m-1}\xi_t^{2k}
\]
and let $T=\mathrm{Op}(a_{\delta,m})$. Observing that $-(\partial_n^2+\partial_t^2)u = \lambda^2 u$ along $\beta$, we may rewrite the RHS of \eqref{rel} as
\[
\left\langle f^2\left(1+(-1)^{m-1}\lambda^{-2m}\partial_t^{2m}\right)u|_\beta, u|_\beta \right\rangle_{\beta}.
\]
We use integrate by parts to further simplify the second term as follows
\begin{align*}
&\left\langle (-1)^{m-1}f^2\lambda^{-2m}\partial_t^{2m}u|_\beta, u|_\beta \right\rangle_{\beta}+\lambda^{-2m}\left\langle \partial_t^{m}\left(fu|_\beta\right), \partial_t^{m}\left(fu|_\beta\right) \right\rangle_{\beta}\\
=&\left\langle (-1)^{m-1}\lambda^{-2m}f\lbrack f,\partial_t^{2m}\rbrack u|_\beta, u|_\beta \right\rangle_{\beta}\\
=&O_{m,f}(\lambda^{-\frac{1}{4}}),
\end{align*}
where we used Lemma \ref{lem} with $L=f\lbrack f,\partial_t^{2m}\rbrack$ in the last estimate. So we have
\begin{equation}\label{RHS}
-\frac{1}{\lambda^2}\langle \partial_n T u|_\beta,u|_\beta\rangle_\beta=\int_\beta |f(t)u(t)|^2 dt - \lambda^{-2m}\int_{\beta} \left|\partial_t^{m}(f(t)u(t))\right|^2 dt+O_{m,f}\left(\lambda^{-\frac{1}{4}}\right).
\end{equation}

Now let $-\Delta_g=\sum_{|\alpha|=1,2}b_\alpha(n,t)\left(\frac{\partial_n}{i}\right)^{\alpha_1}\left(\frac{\partial_t}{i}\right)^{\alpha_2}$.
Observe from \eqref{triv2} that if $\alpha_1+\alpha_2=1$,
\[
\frac{1}{\lambda} \left\lbrack b_\alpha(n,t) \left(\frac{\partial_n}{i}\right)^{\alpha_1}\left(\frac{\partial_t}{i}\right)^{\alpha_2},\chi\left(\frac{n}{\delta}\right)f^2(t) \frac{(-1)^k \partial_n \partial_t^{2k}}{\lambda^{2k+1}}\right\rbrack u= O_{\delta,m,f}(\lambda^{-\frac{1}{2}})
\]
and that if $\alpha_1+\alpha_2=2$,
\begin{align*}
&\frac{1}{\lambda} \left\lbrack b_\alpha(n,t) \left(\frac{\partial_n}{i}\right)^{\alpha_1}\left(\frac{\partial_t}{i}\right)^{\alpha_2},\chi\left(\frac{n}{\delta}\right)f^2(t) \frac{\partial_n (-1)^k\partial_t^{2k}}{\lambda^{2k+1}}\right\rbrack u\\
= &\mathrm{Op}\left(\frac{\alpha_1}{\delta}b_\alpha(n,t)\chi'\left(\frac{n}{\delta}\right)f^2(t)\xi_n^{\alpha_1}\xi_t^{2k+\alpha_2}\right)u\\
+&\mathrm{Op}\left(\chi\left(\frac{n}{\delta}\right)R_{m,f,\alpha}(n,t,\xi_n,\xi_t)\right)u+O_{\delta,m,f}(\lambda^{-\frac{1}{2}})
\end{align*}
for some symbol $R_{m,f,\alpha}$ of finite order depending only on $m,f,\alpha$. Therefore we may reexpress the LHS of \eqref{rel} as
\begin{align*}
&\left\langle \mathrm{Op}\left(\sum_{|\alpha|=2}\frac{\alpha_1\xi_n^{\alpha_1}\xi_t^{\alpha_2}}{\delta}b_\alpha(n,t)\chi'\left(\frac{n}{\delta}\right)f^2(t)\sum_{k=0}^{m-1}\xi_t^{2k}\right)u,u\right\rangle_{U_-}\\
+&\left\langle \mathrm{Op}\left(\chi\left(\frac{n}{\delta}\right)R_{m,f}(n,t,\xi_n,\xi_t)\right)u,u\right\rangle_{U_-}\\
+&O_{\delta,m,f}(\lambda^{-\frac{1}{2}})
\end{align*}
for some finite order symbol $R_{m,f}$.

We bound the second inner product using Cauchy-Schwartz inequality by
\begin{align*}
&\left|\left\langle \mathrm{Op}\left(\chi\left(\frac{n}{\delta}\right)R_{m,f}(n,t,\xi_n,\xi_t)\right) u, u\right\rangle_{U_-}\right| \\
\leq &\left\|\mathrm{Op}\left(\chi\left(\frac{n}{\delta}\right)R_{m,f}(n,t,\xi_n,\xi_t)\right) u\right\|_{L^2(U_-)}^2\\
\leq &\left\|\mathrm{Op}\left(\chi\left(\frac{n}{\delta}\right)R_{m,f}(n,t,\xi_n,\xi_t)\right) u\right\|_{L^2(U)}^2
\end{align*}
and from the assumption that the QUE holds, we may estimate the last quantity as $O_{m,f}(\delta)+o_{\delta,m,f}(1)$ as $\lambda$ tends to $+\infty$.

Now let $\chi_0\in C_0^\infty (\mathbb{R})$ be given by $\chi_0(x)=\chi'(x)$ if $x<0$, and $\chi_0(x)=0$ otherwise. We then have
\begin{align}\label{LHS}
&\left\langle \mathrm{Op}\left(\sum_{|\alpha|=2}\frac{\alpha_1\xi_n^{\alpha_1}\xi_t^{\alpha_2}}{\delta}b_\alpha(n,t)\chi'\left(\frac{n}{\delta}\right)f^2(t)\sum_{k=0}^{m-1}\xi_t^{2k}\right)u,u\right\rangle_{U_-}\notag\\
=&\left\langle \mathrm{Op}\left(\sum_{|\alpha|=2}\frac{\alpha_1\xi_n^{\alpha_1}\xi_t^{\alpha_2}}{\delta}b_\alpha(n,t)\chi_0\left(\frac{n}{\delta}\right)f^2(t)\sum_{k=0}^{m-1}\xi_t^{2k}\right)u,u\right\rangle_{U}\notag\\
=&\int_{S^*U} \sum_{|\alpha|=2}\frac{\alpha_1\xi_n^{\alpha_1}\xi_t^{\alpha_2}}{\delta}b_\alpha(n,t)\chi_0\left(\frac{n}{\delta}\right)f^2(t)\sum_{k=0}^{m-1}\xi_t^{2k}d\mu+o_{\delta,m,f}(1)
\end{align}
as $\lambda$ tends to $+\infty$ from the assumption that QUE holds.

We therefore conclude from \eqref{RHS} and \eqref{LHS} that
\begin{multline*}
\int_\beta |f(t)u(t)|^2 dt - \lambda^{-2m}\int_{\beta} \left|\partial_t^{m}(f(t)u(t))\right|^2 dt\\
=\int_{S^*U} \sum_{|\alpha|=2}\frac{\alpha_1\xi_n^{\alpha_1}\xi_t^{\alpha_2}}{\delta}b_\alpha(n,t)\chi_0\left(\frac{n}{\delta}\right)f^2(t)\sum_{k=0}^{m-1}\xi_t^{2k}d\mu\\
+O_{m,f}\left(\lambda^{-\frac{1}{4}}\right)+O_{m,f}(\delta)+o_{\delta,m,f}(1)
\end{multline*}
and so
\begin{multline*}
\lim_{\lambda \to \infty}\int_\beta |f(t)u(t)|^2 dt - \lambda^{-2m}\int_{\beta} \left|\partial_t^{m}(f(t)u(t))\right|^2 dt\\
=\int_{S^*U} \sum_{|\alpha|=2}\frac{\alpha_1\xi_n^{\alpha_1}\xi_t^{\alpha_2}}{\delta}b_\alpha(n,t)\chi_0\left(\frac{n}{\delta}\right)f^2(t)\sum_{k=0}^{m-1}\xi_t^{2k}d\mu+O_{m,f}(\delta).
\end{multline*}
Note that no terms in the left hand side depend on $\delta$. Also note that $b_{20}(0,t)=b_{02}(0,t)=1$ and $b_{11}(0,t)=0$ since we are taking Fermi normal coordinate. Therefore by taking $\delta \to 0$, we have
\begin{align*}
&\lim_{\delta \to 0}\int_{S^*U} \sum_{|\alpha|=2}\frac{\alpha_1\xi_n^{\alpha_1}\xi_t^{\alpha_2}}{\delta}b_\alpha(n,t)\chi_0\left(\frac{n}{\delta}\right)f^2(t)\sum_{k=0}^{m-1}\xi_t^{2k}d\mu+O_{m,f}(\delta)\\
=&\int_{S_\beta^*U} \sum_{|\alpha|=2}\alpha_1 b_\alpha(0,t)f^2(t)\xi_n^{\alpha_1}\xi_t^{\alpha_2}\sum_{k=0}^{m-1}\xi_t^{2k}d\mu\\
=&\frac{1}{\pi} \int_\beta f^2(t) dt \int_{\xi_t^2+\xi_n^2=1}(1-\xi_t^{2m}) d\xi\\
=&2\left(1-\frac{1}{\pi}\int_{-1}^1 \xi^{2m} \frac{d\xi}{\sqrt{1-\xi^2}}\right)\int_\beta f^2(t) dt. \end{align*}
This implies that
\begin{multline*}
\lim_{\lambda \to \infty}\int_\beta |f(t)u(t)|^2 dt - \lambda^{-2m}\int_{\beta} \left|\partial_t^{m}(f(t)u(t))\right|^2 dt\\
=2\left(1-\frac{1}{\pi}\int_{-1}^1 \xi^{2m} \frac{d\xi}{\sqrt{1-\xi^2}}\right)\int_\beta f^2(t) dt+o_{m,f}(1)
\end{multline*}
as $\delta \to 0$, and since $\delta$ can be chosen arbitrarily small, we conclude that
\begin{multline*}
\lim_{\lambda \to \infty}\int_\beta |f(t)u(t)|^2 dt - \lambda^{-2m}\int_{\beta} \left|\partial_t^{m}(f(t)u(t))\right|^2 dt\\
=2\left(1-\frac{1}{\pi}\int_{-1}^1 \xi^{2m} \frac{d\xi}{\sqrt{1-\xi^2}}\right)\int_\beta f^2(t) dt 
\end{multline*}
\end{proof}

As an immediate application of Theorem \ref{awe}, we give a sharp lower bound for the $L^2$ estimate of the restriction of eigenfunctions.
\begin{corollary}\label{lower}
Assume that QUE holds for the sequence of even eigenfunctions $\{u_{n}\}$. Then for any fixed real valued function $f \in C_0^\infty (\beta)$, we have
\begin{equation*}
\liminf_{n \to \infty} \int_{\beta} f^2(t)|u_{n}(t)|^2 dt \geq 2\int_\beta f^2(t) dt.
\end{equation*}
\end{corollary}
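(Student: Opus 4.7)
The plan is to derive the corollary as a straightforward consequence of Theorem \ref{awe} by letting the order $m$ of the derivative tend to infinity. The key observation is that the ``correction term'' $\lambda_n^{-2m}\int_\beta |\partial_t^m(fu_n)|^2\,dt$ in the statement of Theorem \ref{awe} is manifestly non-negative, so discarding it can only decrease the left-hand side of the limit.

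More precisely, I would first fix an arbitrary non-negative integer $m$ and apply Theorem \ref{awe} to obtain
\[
\lim_{n\to\infty}\left(\int_\beta f^2(t)|u_n(t)|^2\,dt - \lambda_n^{-2m}\int_\beta |\partial_t^m(f(t)u_n(t))|^2\,dt\right) = 2(1-b_{2m})\int_\beta f^2(t)\,dt,
\]
where $b_{2m} := \frac{1}{\pi}\int_{-1}^1 \xi^{2m}\,\frac{d\xi}{\sqrt{1-\xi^2}}$. Since the subtracted term is non-negative, dropping it gives
\[
\liminf_{n\to\infty}\int_\beta f^2(t)|u_n(t)|^2\,dt \;\geq\; 2(1-b_{2m})\int_\beta f^2(t)\,dt
\]
for every $m\geq 0$. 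Since $m$ was arbitrary, the left-hand side (which does not depend on $m$) is bounded below by the supremum of the right-hand side over $m$.

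Finally, I would let $m\to\infty$ on the right-hand side. The integrand $\xi^{2m}/\sqrt{1-\xi^2}$ is dominated by the integrable function $1/\sqrt{1-\xi^2}$ on $(-1,1)$ and tends to $0$ pointwise for $|\xi|<1$, so the dominated convergence theorem yields $b_{2m}\to 0$. Hence $2(1-b_{2m})\to 2$, which gives
\[
\liminf_{n\to\infty}\int_\beta f^2(t)|u_n(t)|^2\,dt \;\geq\; 2\int_\beta f^2(t)\,dt,
\]
as required. There is no real obstacle here; the argument is essentially a bookkeeping exercise that extracts the sharp constant from the family of Rellich identities proved in Theorem \ref{awe}, so the only thing worth checking carefully is the elementary asymptotic $b_{2m}\to 0$.
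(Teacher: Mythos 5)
Your proof is correct and takes essentially the same approach as the paper: drop the manifestly non-negative correction term, then let $m\to\infty$. If anything your use of dominated convergence to establish $b_{2m}\to 0$ is cleaner than the paper's unexplained assertion ``$= 2\int_\beta f^2\,dt + O(1/m)$'' (in fact the sharp rate is $b_{2m}\sim (\pi m)^{-1/2}$ by Wallis/Stirling, so the paper's $O(1/m)$ is an overstatement, though immaterial to the conclusion).
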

\begin{proof}
By the positivity of $\lambda^{-2m}\int_{\beta} \left|\partial_t^{m}(f(t)u(t))\right|^2 dt$,
\begin{align*}
\liminf_{n \to \infty} \int_{\beta} f^2(t)|u_{n}(t)|^2 dt&\geq 2\left(1-\frac{1}{\pi}\int_{-1}^1 \xi^{2m} \frac{d\xi}{\sqrt{1-\xi^2}}\right)\int_\beta f^2(t) dt\\
&=2\int_\beta f^2(t) dt +O(1/m).
\end{align*}
Since the limit does not depend on $m$, we conclude that
\[
\liminf_{n \to \infty} \int_{\beta} f^2 (t)|u_{n}(t)|^2 dt \geq 2\int_\beta f^2(t) dt.\qedhere
\]
\end{proof}
\begin{remark}
Constant lower bound for $L^2$ norm of the restriction of an eigenfunction to a geodesic segment is first proven in \cite{GRS}, from the arithmetic QUE theorem \cite{lin06,so10}.
\end{remark}
\begin{remark}
If the geodesic flow on $M$ is ergodic, it is known that there exists a density $1$ subsequence $\{u_{n}\}$ of even eigenfunctions that satisfies
\begin{equation}\label{fac2}
\lim_{n \to \infty} \|u_{n}\|_{L^2(\beta)}^2 = 2 l(\beta),
\end{equation}
hence the lower bound in Corollary \ref{lower} is sharp. Existence of such a subsequence is a consequence of results which are studied in \cite{burq,tz1,dz,ctz}.
\end{remark}
\begin{remark}
If $\beta$ is not a part of $\mathrm{Fix}(\tau)$ and satisfies a certain asymmetry condition (see, for instance, \cite[Definition 1]{tz1}), then
\[
\lim_{k \to \infty} \|u_{n_k}\|_{L^2(\beta)}^2 = l(\beta)
\]
along a density $1$ subsequence $\{u_{n_k}\}$ of $\{u_n\}$. When $\beta$ is a segment of $\mathrm{Fix}(\tau)$, then every odd eigenfunctions vanish identically on $\beta$, hence explaining why we expect the factor $2$ in \eqref{fac2}.
\end{remark}
\section{The number of nodal domains of even eigenfunctions}
\subsection{Graph structure of the nodal set and Euler's inequality}\label{sec1}
In this section we briefly review the topological argument in \cite{GRS,JZ1} on bounding the number of nodal domains from below by the number of zeros on $\mathrm{Fix}(\tau)$. We refer the readers to \cite{JZ1} for details.

Firstly note that if there exists a segment of $\eta \subset \mathrm{Fix}(\tau)$ such that $\eta \subset Z_u$, then because normal derivative of $u$ vanishes along $\mathrm{Fix}(\tau)$, any point on $\eta$ is a singular point, contradicting the upper bound on the number of singular points in \cite{d}. Therefore together with the following lemma on local structure of the nodal set, we conclude that $Z_u \cap \mathrm{Fix}(\tau)$ is a finite set of points.
\begin{lemma}[Section 6.1, \cite{JZ1}]\label{graph}
Assume that $u$ vanishes to order $N$ at $x_0$. Then there exists a small neighborhood $U$ of $x_0$ such that the nodal set in $U$ is $C^1$ equivalent to $2N$ equi-angular rays emanating from $x_0$.
\end{lemma}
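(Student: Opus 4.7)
The plan is to reduce to a local Euclidean problem via a Bers-type expansion and then to run a perturbative implicit-function argument. Take geodesic normal coordinates $(x_1, x_2)$ on a small ball around $x_0$ so that $g_{ij}(x_0) = \delta_{ij}$; in these coordinates the eigenfunction equation takes the form $(-\Delta + B)u = 0$, with $\Delta$ the Euclidean Laplacian and $B$ a first-order operator with smooth coefficients whose top-order part vanishes at the origin. Since $u$ vanishes to order exactly $N$ at $x_0$, the Bers lemma applied to this elliptic equation yields
\[
u(x) = p_N(x) + R(x),
\]
where $p_N$ is a non-zero homogeneous polynomial of degree $N$, harmonic with respect to the Euclidean Laplacian, and $\partial^\alpha R(x) = O(|x|^{N+1-|\alpha|})$ for every multi-index $|\alpha| \leq N$.

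Next I would analyze the zero set of the leading term. In polar coordinates any non-zero harmonic homogeneous polynomial of degree $N$ on $\mathbb{R}^2$ has the form $c\, r^N \cos(N(\theta - \theta_0))$, whose nodal set consists of exactly $2N$ equi-angular rays emanating from the origin, spaced by $\pi/N$; moreover, the angular derivative of $p_N / r^N$ is non-zero at each such ray, so the zeros are simple in the angular variable.

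To transfer this structure to $u$, introduce the rescaling $v_\epsilon(y) := \epsilon^{-N} u(\epsilon y)$. The Bers expansion gives $v_\epsilon \to p_N$ in $C^k$ on every compact subset of $\mathbb{R}^2 \setminus \{0\}$, for any $k$, as $\epsilon \to 0^+$. Since $p_N|_{S^1}$ has $2N$ simple zeros on the unit circle, the implicit function theorem implies that for all sufficiently small $\epsilon$ the function $v_\epsilon|_{S^1}$ also has exactly $2N$ simple zeros, each $C^1$-close to the corresponding zero of $p_N$. Running the same argument on each dyadic annulus $\{\epsilon/2 \leq |x| \leq \epsilon\}$ and concatenating the resulting pieces, one extracts $2N$ disjoint $C^1$ nodal arcs $\gamma_1, \dots, \gamma_{2N}$ of $u$ in a punctured neighborhood of $x_0$, each tangent at $x_0$ to one of the rays of $p_N$.

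Finally, one constructs a local $C^1$-diffeomorphism $\Phi$ that fixes $x_0$ and straightens each $\gamma_j$ onto the corresponding ray of $p_N$. Away from $x_0$ this is a standard tubular-neighborhood construction in the transverse direction to each $\gamma_j$. The main obstacle, and the reason the rescaled convergence is essential, is the regularity of $\Phi$ at $x_0$ itself: one needs the angular deviation of $\gamma_j$ from its tangent ray to be $o(1)$ uniformly as $|x| \to 0$, with a rate across dyadic scales that makes the straightening map differentiable at the origin. This quantitative control is supplied by the uniform $C^1$-bounds on $v_\epsilon - p_N$ in the annulus $\{1/2 \leq |y| \leq 1\}$ that the Bers expansion delivers, and together with the transverse non-degeneracy of $p_N$ on each ray this is exactly what promotes the set-theoretic convergence of the nodal curves to a $C^1$-equivalence.
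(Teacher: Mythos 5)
The paper does not prove this lemma itself but cites Section~6.1 of [JZ1], which in turn rests on Cheng's theorem on the local structure of nodal sets of solutions to second-order elliptic equations. Your argument --- the Bers expansion $u = p_N + O(|x|^{N+1})$ in normal coordinates, the observation that a nonzero degree-$N$ homogeneous harmonic polynomial in the plane is $c\,r^N\cos(N(\theta-\theta_0))$ with $2N$ equi-angular nodal rays, and the dyadic rescaling $v_\epsilon(y)=\epsilon^{-N}u(\epsilon y)$ combined with the implicit function theorem to transfer this structure and build a $C^1$ straightening diffeomorphism --- is exactly the standard proof of Cheng's theorem in dimension two, so it matches the argument underlying the cited lemma.
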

From Lemma \ref{graph}, we may view the nodal set as a graph (a {\it nodal graph}) embedded on a surface as follows.
\begin{enumerate}
\item For each connected component of $Z_u$ that is homeomorphic to a circle and that does not intersect $\mathrm{Fix}(\tau)$, we add a vertex.
\item Each singular point is a vertex.
\item Each intersection point in $\mathrm{Fix}(\tau) \cap Z_u$ is a vertex.
\item Edges are the arcs of $Z_u \cup \mathrm{Fix}(\tau)$ that join the vertices listed above.
\end{enumerate}
Let $V(u)$ and $E(u)$ be the finite set of vertices and the finite set of edges given above, respectively. This way, we obtain a {\it nodal graph} $V(u),E(u)$ of $u$ embedded into the surface $M$.

From the assumption that $\mathrm{Fix}(\tau)$ is separating, the nodal domains that intersect $\mathrm{Fix}(\tau)$ are cut in two by $\mathrm{Fix}(\tau)$. Therefore the number of faces divided by two bounds the number of nodal domains $\mathcal{N}(u)$ from below.

Observe from Lemma \ref{graph} that every vertex of a nodal graph has degree at least $2$. Then by Euler's inequality \cite[(6.1)]{JZ1}:
\[
|V(u)|-|E(u)|+|F(u)| -m(u) \geq 1- 2 \mathfrak{g},
\]
we obtain a lower bound for the number of nodal domains by the number of zeros on $\mathrm{Fix}(\tau)$. Here $m(G)$ is the number of connected components of the nodal graph and $\mathfrak{g}$ is the genus of the surface $M$.
\begin{lemma}[Lemma 6.4, \cite{JZ1}]
\[
\mathcal{N}(u) \geq \frac{1}{2}\#\left(Z_{u} \cap \mathrm{Fix}(\tau) \right)+1-\mathfrak{g}.
\]
\end{lemma}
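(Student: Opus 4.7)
The plan is to combine the Euler-type inequality with a face-to-nodal-domain bound and a degree-counting estimate. By the setup above, the embedded nodal graph satisfies
\[
|V(u)| - |E(u)| + |F(u)| - m(u) \geq 1 - 2\mathfrak{g}.
\]
Since $\mathrm{Fix}(\tau)$ is separating, each nodal domain of $u$ is either disjoint from $\mathrm{Fix}(\tau)$ (hence is a face itself) or meets $\mathrm{Fix}(\tau)$, in which case it is $\tau$-invariant (as $u$ is even) and is cut into exactly two faces. This gives $|F(u)| \leq 2\mathcal{N}(u)$. Combining with Euler's inequality yields
\[
\mathcal{N}(u) \geq \tfrac{1}{2}\bigl(1 - 2\mathfrak{g} + |E(u)| - |V(u)| + m(u)\bigr),
\]
and so it remains to prove $|E(u)| - |V(u)| + m(u) \geq N + 1$ with $N := \#(Z_u \cap \mathrm{Fix}(\tau))$.

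To establish the latter I would partition $V(u)$ into the three prescribed types: $L$ type-(1) vertices (one per closed component of $Z_u$ disjoint from $\mathrm{Fix}(\tau)$), $S$ type-(2) vertices (singular points away from $\mathrm{Fix}(\tau)$), and the $N$ points of $Z_u \cap \mathrm{Fix}(\tau)$. Lemma \ref{graph} shows that a zero of order $N_0$ meets $2N_0$ arcs of $Z_u$, so these vertices have degrees exactly $2$, at least $4$ (here $N_0 \geq 2$), and at least $4$ (namely $2$ from $\mathrm{Fix}(\tau)$ plus $2N_0 \geq 2$ from $Z_u$) respectively. Applying the handshake identity $2|E(u)| = \sum_v \deg(v)$ gives $2|E(u)| \geq 2L + 4S + 4N$, and subtracting $|V(u)| = L+S+N$ yields
\[
|E(u)| - |V(u)| \geq (L + 2S + 2N) - (L + S + N) = S + N \geq N.
\]
Because $u$ is a non-constant eigenfunction its nodal set is non-empty, so the nodal graph has at least one connected component and $m(u) \geq 1$, producing $|E(u)| - |V(u)| + m(u) \geq N + 1$ as required. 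Substituting back delivers $\mathcal{N}(u) \geq N/2 + 1 - \mathfrak{g}$.

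The only non-routine point I anticipate is bookkeeping: for Euler's inequality to apply cleanly, any component of $\mathrm{Fix}(\tau)$ disjoint from $Z_u$ must still be represented in the embedded graph, which I would handle by adjoining an auxiliary vertex on each such circle to make it a degree-$2$ self-loop. This modification adds the same amount to $|V(u)|$, $|E(u)|$, and $m(u)$, hence leaves $|E(u)| - |V(u)| + m(u)$ unchanged, so the estimate above is unaffected.
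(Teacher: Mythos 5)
Your proof follows the paper's sketch step for step: the Euler inequality $|V|-|E|+|F|-m\geq 1-2\mathfrak g$, the bound $|F(u)|\leq 2\mathcal N(u)$ from $\mathrm{Fix}(\tau)$ being separating, and a degree count via Lemma~\ref{graph}. The degree count is the part the paper elides --- the text only observes ``every vertex has degree at least $2$,'' which gives merely $|E|-|V|\geq 0$ and hence no dependence on $\#(Z_u\cap\mathrm{Fix}(\tau))$ --- and you supply the needed refinement correctly: degree exactly $2$ at the self-loop vertices placed on circle components of $Z_u$ disjoint from $\mathrm{Fix}(\tau)$, degree $2N_0\geq 4$ at singular points (vanishing order $N_0\geq 2$), and degree $2N_0+2\geq 4$ at points of $Z_u\cap\mathrm{Fix}(\tau)$, where the $2N_0$ nodal arcs and the $2$ arcs of $\mathrm{Fix}(\tau)$ are genuinely distinct edges because $Z_u$ contains no segment of $\mathrm{Fix}(\tau)$. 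Your bookkeeping for $\mathrm{Fix}(\tau)$-components disjoint from $Z_u$ (adding an auxiliary degree-$2$ vertex, which shifts $|V|,|E|,m$ equally) is also correct, and the final arithmetic delivers $\mathcal N(u)\geq N/2+1-\mathfrak g$.

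The one place where you assert more than the argument on the page supplies is that a nodal domain meeting $\mathrm{Fix}(\tau)$ ``is cut into exactly two faces.'' What is immediate is that such a domain $D$ is $\tau$-invariant (it contains a fixed point of $\tau$ where $u\neq 0$) and $D\setminus\mathrm{Fix}(\tau)$ is disconnected, so $D$ contributes \emph{at least} two faces; what the inequality $|F(u)|\leq 2\mathcal N(u)$ actually requires is \emph{at most} two, and a priori a multiply-connected $D$ whose boundary crosses $\mathrm{Fix}(\tau)$ several times could be divided into more than two components of $M\setminus(Z_u\cup\mathrm{Fix}(\tau))$. You inherit this from the paper, which makes the same informal claim (``cut in two''), so it is not a gap you introduced; but it is the only spot where the write-up is not self-contained, and the precise justification lives in \cite{JZ1}, which you should consult for how this case is handled.
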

Therefore in order to prove Theorem \ref{thm1}, it is sufficient to prove the following theorem.
\begin{theorem}\label{thm2}
Assume that QUE holds for the sequence of even eigenfunctions $\{u_{n}\}_{n \geq 1}$. Then
\[
\lim_{n\to \infty}\#\left(Z_{u_{n}}\cap \mathrm{Fix}(\tau)\right) = +\infty.
\]
\end{theorem}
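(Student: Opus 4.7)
The plan is to formalize the Bochner and moment argument outlined in \S\ref{Sketch}. I would first reduce the theorem to the local statement that on every fixed open sub-arc $\beta\subset\mathrm{Fix}(\tau)$, the eigenfunction $u_n$ must change sign on $\beta$ for all but finitely many $n$: given this, dividing $\mathrm{Fix}(\tau)$ into any $N$ disjoint open sub-arcs produces $N$ distinct zeros of $u_n$ in $\mathrm{Fix}(\tau)$ for $n$ large, and letting $N\to\infty$ then finishes the proof.

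To establish the local sign change I would fix a nonnegative, nontrivial $f\in C_0^\infty(\beta)$, set $\psi_n := f\cdot u_n|_\beta$ (extended by zero to $\mathbb{R}$), and consider the probability density
\[
h_n(\xi) = \frac{\lambda_n|\widehat{\psi_n}(\lambda_n\xi)|^2}{\|\psi_n\|_{L^2(\mathbb{R})}^2}.
\]
Arguing by contradiction, along a subsequence $u_n$ keeps a constant sign on $\beta$; after flipping signs as needed I may take $\psi_n\geq 0$. Then $\psi_n * \psi_n(-\,\cdot)\geq 0$, and because its Fourier transform equals $|\widehat{\psi_n}|^2$ (as $\psi_n$ is real), Fourier inversion yields $\int_{\mathbb{R}} e^{is\xi}h_n(\xi)\,d\xi\geq 0$ for every $s\in\mathbb{R}$.

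Next I would extract the limiting moments of $h_n\,d\xi$. Odd moments vanish by symmetry of $|\widehat{\psi_n}|^2$, and Parseval gives $\int\xi^{2m}h_n\,d\xi = \lambda_n^{-2m}\|\partial_t^m\psi_n\|_{L^2}^2/\|\psi_n\|_{L^2}^2$. Writing $b_{2m} := \pi^{-1}\int_{-1}^1\xi^{2m}(1-\xi^2)^{-1/2}\,d\xi$, Theorem \ref{awe} recasts the above as $1 - 2(1-b_{2m})\|f\|_{L^2}^2/\|\psi_n\|_{L^2}^2 + o(1)$. Corollary \ref{lower} gives $\liminf\|\psi_n\|_{L^2}^2 \geq 2\|f\|_{L^2}^2$, so along a further subsequence $\|\psi_n\|_{L^2}^2\to L\in[2\|f\|_{L^2}^2,+\infty]$, and with $a := 2\|f\|_{L^2}^2/L\in[0,1]$ the $2m$th moment of $h_n\,d\xi$ converges to $(1-a)+a\, b_{2m}$. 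These are exactly the moments of the compactly supported probability measure
\[
d\mu_a = \tfrac{1-a}{2}(\delta_{-1}+\delta_1) + a\cdot\pi^{-1}(1-\xi^2)^{-1/2}\mathbf{1}_{[-1,1]}\,d\xi.
\]
Since the moment problem on $[-1,1]$ is determinate, $h_n\,d\xi\rightharpoonup d\mu_a$ weakly. Passing the nonnegativity of $\int e^{is\xi}h_n\,d\xi$ to the limit yields $(1-a)\cos s + a\,J_0(s) \geq 0$ for every $s\in\mathbb{R}$, but at $s=\pi$ both $\cos\pi = -1$ and $J_0(\pi)<0$, so the left side is strictly negative for every $a\in[0,1]$, a contradiction.

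I expect the main obstacle to be that Corollary \ref{lower} provides only a lower bound for $\|\psi_n\|_{L^2}^2$: the only available upper bound $O(\lambda_n^{1/2})$ from Burq--G\'erard--Tzvetkov permits blow-up and consequently forces me to analyze the entire one-parameter family $\{d\mu_a\}_{a\in[0,1]}$ of limiting measures rather than a single one. The observation that resolves this is that the single value $s=\pi$ simultaneously witnesses non-positive-definiteness of $d\mu_a$ for every $a\in[0,1]$, closing the argument uniformly.
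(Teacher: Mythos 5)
Your proposal is correct and mirrors the paper's own proof: reduce to a sign change on each fixed arc, contradict via the normalized Fourier density $h_n$, use Theorem \ref{awe} and Corollary \ref{lower} to identify the limiting measure $d\mu_a$, and observe that its characteristic function $(1-a)\cos s + aJ_0(s)$ is negative at $s=\pi$ for every $a\in[0,1]$. The only cosmetic difference is that you invoke compact support of $d\mu_a$ for moment determinacy and spell out the $\psi_n\ast\psi_n(-\cdot)$ computation, whereas the paper packages the same content in its Lemmas \ref{prob1}--\ref{prob3}.
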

\subsection{Lemmata from probability theory}
In order to prove Theorem \ref{thm2}, we first recall some facts about probability measures. We assume that all random variables in this section are defined on the real line.

\begin{lemma}\label{prob1}
Suppose a random variable $X$ has moments $\mu_k=\mathbb{E}[X^k]$ that satisfies the condition
\[
\limsup_{k\rightarrow\infty}\mu_{2k}^{\frac{1}{2k}}/2k=r<\infty.
\]
Then, $X$ has the unique distribution with moments $(\mu_k)_{k\geq1}$.
\end{lemma}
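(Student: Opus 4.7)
The plan is to show that the hypothesis forces the distribution of $X$ to have an analytic characteristic function on a horizontal strip around the real axis, which then pins down the distribution uniquely through Fourier inversion. This is essentially the content of Cram\'er's theorem on moment-determinacy, and the hypothesis $\limsup \mu_{2k}^{1/(2k)}/(2k) = r < \infty$ is a convenient quantitative form of the condition $\mathbb{E}[e^{s|X|}] < \infty$ for some $s>0$.

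First I would extract a usable bound on all moments: fix any $r' > r$, so that for $k$ sufficiently large $\mu_{2k} \leq (2kr')^{2k}$. By Cauchy--Schwarz applied to $|X|^k = |X|^k \cdot 1$, one has $\mathbb{E}[|X|^k] \leq \mu_{2k}^{1/2} \leq (2kr')^k$, hence $|\mu_k| \leq (2kr')^k$ for $k$ large. Stirling's formula then gives
\[
\sum_{k=0}^{\infty} \frac{|t|^k |\mu_k|}{k!} \;\lesssim\; \sum_{k} \frac{(2ekr')^k}{k^k \sqrt{2\pi k}} \;=\; \sum_{k} \frac{(2er')^k}{\sqrt{2\pi k}},
\]
which converges absolutely for $|t| < 1/(2er')$. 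Letting $r' \downarrow r$ shows the moment generating function $M_X(t) = \mathbb{E}[e^{tX}]$ converges on a real neighborhood of the origin.

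Second, since $M_X(s)$ is finite for small real $s > 0$, we have $\mathbb{E}[e^{s|X|}] < \infty$ (up to replacing $s$ by $s/2$ and combining the $X > 0$ and $X < 0$ pieces), and so the characteristic function
\[
\phi_X(z) \;=\; \mathbb{E}\bigl[e^{izX}\bigr]
\]
extends to a function holomorphic on the strip $S = \{z \in \mathbb{C} : |\Im z| < s\}$. Differentiation under the expectation shows $\phi_X^{(k)}(0) = i^k \mu_k$, so the Taylor series of $\phi_X$ centered at $0$ is determined entirely by the sequence $(\mu_k)_{k\geq 1}$.

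Finally, suppose $Y$ is any random variable with $\mathbb{E}[Y^k] = \mu_k$ for every $k \geq 1$. The same estimates apply to $Y$ (since they depend only on the moments), so $\phi_Y$ is also holomorphic on $S$ and shares the same Taylor expansion at $0$ with $\phi_X$. By the identity theorem for holomorphic functions, $\phi_X \equiv \phi_Y$ on $S$, and in particular on the real line. The uniqueness theorem for Fourier transforms of probability measures then forces $X$ and $Y$ to have the same distribution. The only non-routine step is the moment estimate leading to positive radius of convergence of $M_X$; everything else is standard complex analysis plus Fourier uniqueness.
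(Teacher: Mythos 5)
Your argument is correct and is precisely the standard proof behind the Durrett reference (\cite[Theorem 3.3.11]{durrett2010probability}) that the paper points to: extract a super-factorial growth bound on all the moments, conclude that the characteristic function is analytic on a horizontal strip, and finish by the identity theorem plus Fourier uniqueness. Two cosmetic points worth fixing: a $|t|^k$ factor was dropped in the display with the Stirling estimate (the claimed radius of convergence $1/(2er')$ is nonetheless right), and the cleanest way to reach $\mathbb{E}\bigl[e^{s|X|}\bigr]<\infty$ is to apply monotone convergence to $e^{s|X|}=\sum_k s^k|X|^k/k!$ together with your Cauchy--Schwarz bound $\mathbb{E}[|X|^k]\le\mu_{2k}^{1/2}$ -- this avoids the detour through the formal series for $M_X$ and the ``replace $s$ by $s/2$'' remark, since convergence of $\sum_k |\mu_k|s^k/k!$ alone does not by itself give integrability of $e^{sX}$.
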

\begin{proof}
See \cite[Theorem 3.3.11]{durrett2010probability}.
\end{proof}

\begin{lemma}\label{prob2}
If $X_n$ converges to $X$ in moments and the distribution of $X$ is uniquely determined by its moments, then for each $t\in\mathbb{R}$, $\mathbb{E}[e^{itX_n}]$ converges to $\mathbb{E}[e^{itX}]$.
\end{lemma}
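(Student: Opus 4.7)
The plan is to deduce convergence in distribution $X_n \Rightarrow X$ from the hypotheses and then appeal to the fact that weak convergence implies pointwise convergence of characteristic functions via the bounded continuous functional $x \mapsto e^{itx}$.

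First I would establish tightness of $\{X_n\}$. Since $\mathbb{E}[X_n^2] \to \mathbb{E}[X^2] < \infty$, we have $\sup_n \mathbb{E}[X_n^2] < \infty$, so Markov's inequality gives $\sup_n \mathbb{P}(|X_n| > R) \leq R^{-2}\sup_n \mathbb{E}[X_n^2] \to 0$ as $R \to \infty$. By Prokhorov's theorem, every subsequence of $\{X_n\}$ admits a further subsequence $\{X_{n_k}\}$ converging in distribution to some random variable $Y$.

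Next I would identify $Y$ with $X$ by moment matching. For each fixed $m$, the hypothesis provides a uniform bound $\sup_k \mathbb{E}\bigl[|X_{n_k}|^{2m+2}\bigr] < \infty$, which implies uniform integrability of the family $\{X_{n_k}^{2m}\}_{k \geq 1}$ (and of $\{X_{n_k}^{2m+1}\}$ after splitting into positive and negative parts with a Cauchy--Schwarz argument). Together with $X_{n_k} \Rightarrow Y$, this yields $\mathbb{E}[X_{n_k}^j] \to \mathbb{E}[Y^j]$ for each $j \geq 1$. Comparing with the assumed convergence $\mathbb{E}[X_{n_k}^j] \to \mathbb{E}[X^j]$, we conclude that $Y$ has the same moments as $X$. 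By the uniqueness hypothesis on the distribution of $X$, we infer $Y \stackrel{d}{=} X$. Since every subsequence of $\{X_n\}$ has a further subsequence converging in distribution to $X$, the full sequence satisfies $X_n \Rightarrow X$.

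Finally, applying the Portmanteau theorem to the bounded continuous function $x \mapsto e^{itx}$ (separately for its real and imaginary parts) yields $\mathbb{E}[e^{itX_n}] \to \mathbb{E}[e^{itX}]$ for every $t \in \mathbb{R}$. The main obstacle is the uniform integrability step: one must carefully use the finiteness of one higher moment (guaranteed because convergence in moments is assumed for \emph{all} orders) to promote weak subsequential convergence into convergence of individual moments; the rest of the argument is a standard tightness-plus-uniqueness packaging.
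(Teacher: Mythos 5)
Your proof is correct and follows essentially the same route as the paper: establish tightness from the bounded second moments, extract subsequential weak limits (Prokhorov/Helly), use the bounded higher moments to upgrade weak convergence to convergence of all moments, and then invoke uniqueness to identify the limit with $X$. The only cosmetic differences are that you argue directly via the subsequence principle and cite uniform integrability as a black box, whereas the paper proceeds by contradiction and proves the needed moment convergence by hand via truncation, Markov's inequality, Fatou's lemma, and bounded convergence.
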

\begin{proof}
Suppose we have a counterexample of this lemma. That is, we have a sequence $(X_n)$ of random variables and $X$ a random variable, such that $\mathbb{E}\lbrack X_n^m\rbrack \rightarrow\mathbb{E}\lbrack X^m\rbrack $ for all $m>0$, but $\mathbb{E}\lbrack \exp(it_0X_n)\rbrack \not\rightarrow\mathbb{E}\lbrack \exp(it_0 X)\rbrack $ for some $t_0\in\mathbb{R}$.

Let $F_n(x):=\Pr\lbrack X_n\leq x\rbrack $ be the cumulative distribution functions of random variable $X_n$. By Helly's selection theorem \cite[Theorem 3.2.6]{durrett2010probability}, together with the tightness of $(X_{n})$'s \cite[Theorem 3.2.7 and 3.2.8]{durrett2010probability} there exists a subsequence $(F_{n_k})$ that converges to a cumulative distribution function $G$ of some random variable $Y$ on the real line.

\cite[Theorem 3.2.2]{durrett2010probability} implies that, by appropriately settling the probability space $\Omega$ for $X_{n_k}$'s and $Y$, we can have $X_{n_k}(\omega)\rightarrow Y(\omega)$ almost surely for $\omega\in\Omega$. (For instance, we can set $\Omega=(0,1)$, $\Pr=($the Lebesgue measure$)$, and $X_{n_k}(\omega)=\sup\{x\in\mathbb{R}~|~F_{n_k}(x)<\omega\}$, etc.) In particular, as $\exp(it_0X_{n_k})\rightarrow\exp(it_0Y)$ almost surely, together with $|\exp(it_0X_{n_k})|\leq 1$ for all $k$ implies that $\mathbb{E}\lbrack \exp(it_0X_{n_k})\rbrack \rightarrow\mathbb{E}\lbrack \exp(it_0Y)\rbrack $ by the bounded convergence theorem.

From the assumption that $X$ is the unique random variable with the sequence of the moments $\left(\mathbb{E}[X^m]\right)$, we claim $X=Y$ by showing that $\mathbb{E}\lbrack X^m\rbrack =\mathbb{E}\lbrack Y^m\rbrack $ for all $m>0$. Equivalently, $\mathbb{E}\lbrack X_{n_k}^m\rbrack \rightarrow\mathbb{E}\lbrack Y^m\rbrack $ as $k\rightarrow\infty$. Denote by $\chi_M$ the indicator function of $\lbrack -M,M\rbrack $ for $M>0$. We first estimate:
\begin{eqnarray*}
|\mathbb{E}\lbrack X_{n_k}^m\rbrack -\mathbb{E}\lbrack Y^m\rbrack | &\leq& \mathbb{E}\lbrack |X_{n_k}|^m(1-\chi_M(X_{n_k}))\rbrack +\mathbb{E}\lbrack |Y|^m(1-\chi_M(Y))\rbrack  \\
&&\quad+|\mathbb{E}\lbrack X_{n_k}^m\chi_M(X_{n_k})-Y^m\chi_M(Y)\rbrack |.
\end{eqnarray*}
We bound the first term by Cauchy--Schwarz inequality and Markov inequality,
\begin{eqnarray*}
\mathbb{E}\lbrack |X_{n_k}|^m(1-\chi_M(X_{n_k}))\rbrack  &\leq& \mathbb{E}\lbrack |X_{n_k}|^{2m}\rbrack ^{\frac{1}{2}}\mathbb{E}\lbrack (1-\chi_M(X_{n_k}))^2\rbrack ^{\frac{1}{2}} \\
&=& \mathbb{E}\lbrack X_{n_k}^{2m}\rbrack ^{\frac{1}{2}}(\Pr\lbrack |X_{n_k}|>M\rbrack )^{\frac{1}{2}} \\
&\leq& \mathbb{E}\lbrack X_{n_k}^{2m}\rbrack ^{\frac{1}{2}}\mathbb{E}\lbrack X_{n_k}^2\rbrack ^{\frac{1}{2}}M^{-1} \leq KM^{-1},
\end{eqnarray*}
where $K=\sup\{\mathbb{E}\lbrack X_{n_k}^{2m}\rbrack ,\mathbb{E}\lbrack X_{n_k}^2\rbrack ~|~k\}<\infty$. We bound the second term by Fatou's lemma,
\[
\mathbb{E}\lbrack |Y|^m(1-\chi_M(Y))\rbrack  \leq \liminf_{k\rightarrow\infty}\mathbb{E}\lbrack |X_{n_k}|^m(1-\chi_M(X_{n_k}))\rbrack  \leq KM^{-1},
\]
where we used the estimate of the first term in the the last inequality. Finally, observe that the third term converges to $0$, i.e., $|\mathbb{E}\lbrack X_{n_k}^m\chi_M(X_{n_k})-Y^m\chi_M(Y)\rbrack |\rightarrow 0$ as $k\rightarrow\infty$, by the bounded convergence theorem.

Therefore
\[
\limsup_{k \to \infty}|\mathbb{E}\lbrack X_{n_k}^m\rbrack -\mathbb{E}\lbrack Y^m\rbrack | =O(M^{-1})
\]
and since $M$ can be chosen arbitrarily large, we conclude $\mathbb{E}\lbrack X_{n_k}^m\rbrack \rightarrow\mathbb{E}\lbrack Y^m\rbrack $ which implies that $X=Y$. Therefore $\mathbb{E}\lbrack \exp(it_0X_{n_k})\rbrack \rightarrow\mathbb{E}\lbrack \exp(it_0Y)\rbrack =\mathbb{E}\lbrack \exp(it_0X)\rbrack $, contradicting the initial assumption
\[
\mathbb{E}\lbrack \exp(it_0X_{n_k})\rbrack \not\rightarrow\mathbb{E}\lbrack \exp(it_0X)\rbrack.\qedhere
\]
\end{proof}

We now present a new method for detecting sign changes of functions using Lemma \ref{prob1}, Lemma \ref{prob2} and Bochner's theorem.
\begin{lemma}\label{prob3}
Let $\{f_n\}$ be a sequence of real valued functions in $C_0^\infty([0,1])$ and let $\{a_n\}$ be a sequence of positive reals such that for each fixed nonnegative integer $m$ we have
\begin{equation}\label{asdf}
\lim_{n \to \infty} a_n^{-2m}\int_0^1 \left|\partial_x^m f_n(x)\right|^2 dx = b_{2m}
\end{equation}
for some positive real numbers $b_{2m}$. Assume that $d\mu(\xi)$ is the unique probability distribution whose $2m$th moment is $b_{2m}/b_0$ and whose $(2m+1)$th moment is zero for any $m \geq 0$. If $d\mu(\xi)$ is not positive-definite, then all but finitely many $f_n$ has at least one sign change on $(0,1)$.
\end{lemma}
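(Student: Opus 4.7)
The plan is to argue by contradiction. Suppose that infinitely many $f_n$ do not change sign on $(0,1)$; passing to a subsequence and replacing $f_n$ by $-f_n$ when needed (operations that preserve every quantity below), we may assume $f_n\geq 0$ for all $n$. Write $g_n(\xi) = \int_0^1 e^{i\xi s} f_n(s)\,ds$ and introduce the rescaled densities
$$
h_n(\xi) \;=\; \frac{a_n\,|g_n(a_n\xi)|^2}{\|g_n\|_{L^2(\mathbb{R})}^2}.
$$
A change of variable shows that each $h_n(\xi)\,d\xi$ is a probability measure on $\mathbb{R}$.

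To apply Lemma \ref{prob2}, I verify that the moments of $h_n\,d\xi$ converge to those of $d\mu$. Integration by parts gives $\widehat{f_n^{(m)}}(\xi)=(-i\xi)^m g_n(\xi)$, so Plancherel yields $\int \eta^{2m}|g_n(\eta)|^2\,d\eta = 2\pi \int_0^1 |f_n^{(m)}|^2\,ds$ and $\|g_n\|_{L^2}^2 = 2\pi\|f_n\|_{L^2}^2 \to 2\pi b_0$. Combined with hypothesis \eqref{asdf}, this gives that the $2m$-th moment of $h_n\,d\xi$ converges to $b_{2m}/b_0$; and because $f_n$ is real, $|g_n|^2$ is even, so every odd moment of $h_n\,d\xi$ vanishes. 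Since by hypothesis $d\mu$ is uniquely determined by its moments, Lemma \ref{prob2} provides pointwise convergence of characteristic functions,
$$
\phi_n(t)\;=\;\int_{\mathbb{R}} e^{it\xi}\,h_n(\xi)\,d\xi \;\longrightarrow\; \hat{\mu}(t) \qquad\text{for every } t\in\mathbb{R}.
$$

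The decisive step is an explicit autocorrelation formula for $\phi_n$. Unfolding $|g_n(\eta)|^2 = \int\!\int f_n(s)f_n(s')e^{i\eta(s-s')}\,ds\,ds'$ and performing the $\eta$-integration produces a Dirac mass at $s'=s+t/a_n$, yielding
$$
\phi_n(t) \;=\; \frac{1}{\|f_n\|_{L^2}^{2}}\int_{0}^{1} f_n(s)\,f_n\!\left(s+\tfrac{t}{a_n}\right)\,ds,
$$
with $f_n$ extended by zero off $[0,1]$. Since $f_n\geq 0$, each $\phi_n(t)\geq 0$, and passing to the limit forces $\hat{\mu}(t)\geq 0$ for every $t\in\mathbb{R}$. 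But $\hat{\mu}\geq 0$ is precisely the statement that $d\mu$, viewed distributionally, is the Fourier transform of a nonnegative measure, contradicting the hypothesis that $d\mu$ is not positive-definite.

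The main subtlety is balancing normalizations: $h_n$ must be simultaneously a probability density (so that Lemma \ref{prob2} applies) and reproduce the prescribed limiting moments $b_{2m}/b_0$. Once the autocorrelation identity is in hand, the Bochner-theoretic content is transparent: nonnegativity of $f_n$ propagates to nonnegativity of $\phi_n$, and the moment-uniqueness hypothesis upgrades moment convergence to pointwise convergence of characteristic functions, which is exactly what is needed to transmit positivity from $\phi_n$ to $\hat\mu$.
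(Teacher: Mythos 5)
Your argument is correct and follows the same route as the paper's own proof: rescale $|\hat f_n|^2$ to a probability density $h_n\,d\xi$, show its moments converge to those of $d\mu$, invoke Lemma~\ref{prob2} to upgrade to pointwise convergence of characteristic functions, and observe that nonnegativity of $f_n$ forces the characteristic functions $\phi_n$ to be nonnegative, contradicting that $\hat\mu$ takes a negative value. The only cosmetic difference is that you make the Bochner step explicit by writing $\phi_n(t)$ as a normalized autocorrelation $\|f_n\|_{L^2}^{-2}\int f_n(s)f_n(s+t/a_n)\,ds$, whereas the paper cites Bochner's theorem to conclude $h_n$ is positive-definite and hence has nonnegative Fourier transform.
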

\begin{proof}
Assume for contradiction that there exists a subsequence $\{f_{n_k}\}$ such that $f_{n_k}$ does not change sign on $(0,1)$ for all $k \geq 1$. Let $h_k$ be given by
\[
h_k(\xi)= \frac{a_{n_k}}{2\pi}\left|\int_0^1 f_{n_k}(x) e^{ia_{n_k} \xi x}dx\right|^2 \left(\int_0^1 |f_{n_k}(x)|^2 dx\right)^{-1}.
\]
Then from \eqref{asdf}, we have for each $m \geq 0$,
\[
\lim_{k \to \infty} \int_{-\infty}^\infty \xi^{2m} h_k(\xi) d\xi = b_{2m}/b_0
\]
and since $h_k(\xi)$ is an even function in $\xi$, the sequence of probability distribution $\{h_k(\xi)d\xi \}$ converges in moments to $d\mu(\xi)$. We therefore conclude from Lemma \ref{prob2} that the sequence of characteristic functions of $h_k(\xi)d\xi$ converges point-wise to the characteristic function of $d\mu(\xi)$.

Now observe that since $f_{n_k}$ does not change sign along $(0,1)$, $h_k(\xi)$ is a positive-definite function in $\xi$ for each $k$ by Bochner's theorem. Therefore the characteristic function of $h_k(\xi)d\xi$ is a non-negative function for each $k$. However, since we assumed $d\mu(\xi)$ is not positive-definite, the characteristic function $\int_{-\infty}^\infty e^{it\xi}d\mu(\xi)$ is negative for some $t \in \mathbb{R}$, which contradicts the point-wise convergence of characteristic functions. We therefore conclude that all but finitely many $f_n$ has at least one sign change on $(0,1)$.
\end{proof}
\subsection{Sign changes of even eigenfunctions on fixed segments}
\begin{lemma}\label{lem1}
Assume that QUE holds for the sequence of even eigenfunctions $\{u_{n}\}_{n \geq 1}$. For any fixed segment $\beta \subset \mathrm{Fix}(\tau)$, all but finitely many $u_{n}$ have at least one sign change on $\beta$.
\end{lemma}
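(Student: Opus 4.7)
The plan is to apply Lemma \ref{prob3} to the sequence $\psi_n := f \cdot u_n|_\beta$, where $f \in C_0^\infty(\beta)$ is a fixed nonnegative bump not identically zero and $\beta$ is parametrized by arclength. Arguing by contradiction, suppose some subsequence $\{u_{n_k}\}$ has no sign change on $\beta$. Since $f \geq 0$, the functions $\psi_{n_k}$ have no sign change either, and Lemma \ref{prob3} will produce a contradiction once I verify that (i) the relevant normalized moments converge and (ii) the limiting probability distribution is not positive-definite.

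For (i), pass to a further subsequence along which $L := \lim_k \|\psi_{n_k}\|_{L^2(\beta)}^2$ exists; Corollary \ref{lower} guarantees $L \geq 2\int_\beta f^2\, dt > 0$, so $a := 2\int_\beta f^2\, dt / L$ lies in $[0,1]$. Combining this with Theorem \ref{awe} applied to $u_{n_k}$, for each fixed $m \geq 0$ one obtains
\[
\lambda_{n_k}^{-2m}\|\partial_t^m \psi_{n_k}\|_{L^2(\beta)}^2 \longrightarrow L\bigl((1-a) + a c_{2m}\bigr) =: b_{2m},
\]
where $c_{2m} := \frac{1}{\pi}\int_{-1}^1 \xi^{2m}(1-\xi^2)^{-1/2}\, d\xi$ is the $2m$th moment of the arcsine distribution $\mu_{\mathrm{ac}}$ on $[-1,1]$. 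In particular $b_{2m} > 0$ since $c_{2m} > 0$ and $L > 0$, so the positivity hypothesis of Lemma \ref{prob3} is satisfied, with $b_{2m}/b_0 = (1-a) + a c_{2m}$.

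For (ii), the candidate limiting probability measure is
\[
\mu_a := (1-a)\cdot\tfrac{1}{2}(\delta_1 + \delta_{-1}) + a\,\mu_{\mathrm{ac}},
\]
whose $2m$th moment is $(1-a) + a c_{2m}$ and whose odd moments vanish by symmetry. Since $\mu_a$ is supported in $[-1,1]$, Lemma \ref{prob1} (applied with $r=0$) identifies $\mu_a$ as the unique probability distribution with those moments. Its characteristic function reads
\[
\hat\mu_a(t) = (1-a)\cos t + a\, J_0(t),
\]
where $J_0$ is the Bessel function of the first kind. Evaluating at $t = \pi$ gives $-(1-a) + a J_0(\pi) < 0$ for every $a \in [0,1]$, since $J_0(\pi) < 0$. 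Hence $\mu_a$ is not positive-definite, and applying Lemma \ref{prob3} to $\psi_{n_k}$ contradicts the assumption that none of the $\psi_{n_k}$ change sign.

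I expect the only nontrivial step to be the explicit identification of $\mu_a$ together with the sign check $\hat\mu_a(\pi) < 0$; the moment-matching itself is immediate once one recognizes $c_{2m}$ as the arcsine moments, and the extra constant $(1-a)$ in Theorem \ref{awe} is accounted for by the atomic part at $\pm 1$, which reflects the exceptional $L^2$-concentration of even eigenfunctions along $\mathrm{Fix}(\tau)$.
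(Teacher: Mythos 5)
Your proof follows essentially the same route as the paper's: argue by contradiction, use Corollary \ref{lower} to extract a normalization parameter $a\in[0,1]$ along a subsequence, invoke Theorem \ref{awe} to identify the limiting normalized moments $(1-a)+a c_{2m}$, recognize these as the moments of $\mu_a=(1-a)\tfrac{1}{2}(\delta_{-1}+\delta_1)+a\,\mu_{\mathrm{ac}}$ (unique by Lemma \ref{prob1}), evaluate $\hat\mu_a(\pi)=-(1-a)+aJ_0(\pi)<0$, and conclude via Lemma \ref{prob3}.

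One small wrinkle: you pass to a subsequence along which $L:=\lim_k\|\psi_{n_k}\|_{L^2(\beta)}^2$ exists, but you do not rule out that $\|\psi_{n_k}\|_{L^2(\beta)}^2$ might be unbounded (Burq--G\'erard--Tzvetkov only give $O(\lambda^{1/4})$), in which case $L$ and hence each $b_{2m}=L\bigl((1-a)+ac_{2m}\bigr)$ would be infinite and the hypothesis of Lemma \ref{prob3} (finite positive $b_{2m}$) would fail. The paper sidesteps this by instead passing to a subsequence along which the \emph{bounded} quantity $2\int_\beta f^2\,\|fu_{n_k}\|_{L^2(\beta)}^{-2}\to a\in[0,1]$, which always has a convergent subsequence, and then works directly with the normalized moments. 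This is easily patched in your version (either allow $L=+\infty$ with $a=0$ and observe that only the ratio $b_{2m}/b_0=(1-a)+ac_{2m}$ enters the argument, or mimic the paper's reciprocal normalization), so the discrepancy is cosmetic rather than substantive.
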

\begin{proof}
Assume for contradiction that there exists a subsequence of even eigenfunctions $\{u_{n_k}\}_{k \geq 1}$ such that $u_{n_k}$ does not change sign along $\beta$ for all $k \geq 1$. Fix a non-negative function $f \in C_0^\infty(\beta)$.

Firstly, by Corollary \ref{lower}, we can find a subsequence $\{u_{j_k}\}_{k \geq 1}\subset \{u_{n_k}\}_{k \geq 1}$ such that
\begin{equation*}
\lim_{k \to \infty}2 \int_\beta f^2(t) dt||fu_{j_k}||_{L^2(\beta)}^{-2} = a
\end{equation*}
for some $0\leq a\leq 1$. Then by Theorem \ref{awe}, we have that
\[
\lim_{k \to \infty}1 - \lambda_{j_k}^{-2m}\int_{\beta} \left|\partial_t^{m}(f(t)u_{j_k}(t))\right|^2 ||fu_{j_k}||_{L^2(\beta)}^{-2}dt=a-\frac{a}{\pi}\int_{-1}^1 \xi^{2m} \frac{d\xi}{\sqrt{1-\xi^2}}.
\]
We therefore have
\begin{align*}
\lim_{k \to \infty}\lambda_{j_k}^{-2m}\int_{\beta} \left|\partial_t^{m}(f(t)u_{j_k}(t))\right|^2 ||fu_{j_k}||_{L^2(\beta)}^{-2}dt &= (1-a)+\frac{a}{\pi}\int_{-1}^1 \xi^{2m} \frac{d\xi}{\sqrt{1-\xi^2}}\\
&=\int_{-\infty}^\infty\xi^{2m} d\mu_a(\xi)\\
&=b_{2m},
\end{align*}
where $d\mu_a(\xi)$ is the probability measure given by
\[
d\mu_a(\xi)=\frac{(1-a)}{2}(\delta_{-1}(\xi)+ \delta_{1}(\xi))d\xi+\frac{a}{\pi}I_{[-1,1]}(\xi)\frac{d\xi}{\sqrt{1-\xi^2}}.
\]
Here $I_{[-1,1]}(\xi)$ is the indicator function of $[-1,1]$. Observe that
\[
\int_{-\infty}^\infty \xi^{2m} d\mu_a(\xi) \leq  (1-a)+\frac{a}{\pi}\int_{-1}^1 \frac{d\xi}{\sqrt{1-\xi^2}} =1,
\]
and
\[
\limsup_{m \to \infty}\frac{1}{2m} b_{2m}^{\frac{1}{2m}} = 0 < +\infty,
\]
so by Lemma \ref{prob1}, $d\mu_a(\xi)$ is the only probability measure on $\mathbb{R}$ whose $2m$th moment is $b_{2m}$ and whose $(2m+1)$th moment is zero for any $m \geq 0$.

Note that the characteristic function of $d\mu_a(\xi)$ is given by
\[
\int_{-\infty}^\infty e^{it\xi} d\mu_a(\xi)= (1-a) \cos (t)+ a J_0(t),
\]
where $J_0(t)$ is the Bessel function of the first kind and that
\[
(1-a) \cos (\pi)+ a J_0(\pi) \leq J_0(\pi) = -0.3042\ldots <0
\]
which implies that $d\mu_a(\xi)$ is not positive-definite.

It now follows from Lemma \ref{prob3} that $f(t)u_{j_k}$ has at least one sign change along $\beta$ for all but finitely many $k$, which contradicts the assumption that $u_{n_k}$ does not change sign on $\beta$ for all $k\geq 1$. We therefore conclude that all but finitely many $u_{n}$ have at least one sign change on $\beta$.
\end{proof}

We complete the proof of Theorem \ref{thm1} by proving Theorem \ref{thm2}.
\begin{proof}[Proof of Theorem \ref{thm2}]
Fix $N \in \mathbb{N}$. Let $\beta_1, \cdots , \beta_N \subset \mathrm{Fix}(\tau)$ be a set of disjoint segments. Then by Lemma \ref{lem1}, for all sufficiently large $k$, $u_{n}$ has at least one sign change on each curve $\beta_i$ for $i=1,\cdots, N$. Hence we have
\[
\liminf_{k\to \infty}\#\left(Z_{u_{n}}\cap \mathrm{Fix}(\tau)\right) \geq N
\]
and since $N$ can be chosen arbitrarily large, we conclude that
\[
\lim_{k\to \infty}\#\left(Z_{u_{n}}\cap \mathrm{Fix}(\tau)\right) = +\infty. \qedhere
\]
\end{proof}
\section{Nodal domains of odd eigenfunctions}\label{last}
In this section we prove an analogy of Theorem \ref{thm1} for a sequence odd eigenfunctions assuming QUE. Recall from \eqref{green} that
\begin{equation*}
\langle \Delta_g T u,u\rangle_{U_-}-\langle T u, \Delta_g  u\rangle_{U_-}=\langle \partial_n T u|_\beta,u|_\beta\rangle_\beta -\langle T u|_\beta, \partial_n  u|_\beta\rangle_\beta.
\end{equation*}
From the assumption that $u$ is an odd eigenfunction, we have the Rellich identity for odd eigenfunctions
\begin{equation}\label{relodd}
\frac{1}{\lambda}\langle \lbrack-\Delta_g, T\rbrack u,u\rangle_{U_-}=\frac{1}{\lambda}\langle T u|_\beta, \partial_n  u|_\beta\rangle_\beta.
\end{equation}
Let
\[
a_{\delta,m}(x,\xi)=\chi\left(\frac{n}{\delta}\right)f^2(t) \xi_t^{2m}\xi_n,
\]
and let $T= \mathrm{Op}(a_{\delta,m})$. For simplicity, let $Nu=\lambda^{-1}\partial_n u|_\beta$. Then the RHS of \eqref{relodd} is
\begin{multline*}
(-1)^m\lambda^{-2m}\int_{\beta} f^2(t)\left(\partial_t^{2m}Nu(t) \right) Nu(t)dt\\
=\lambda^{-2m}\int_{\beta} \left|\partial_t^{m}\left(f(t)Nu(t) \right)\right|^2 dt+O_{m,f} \left(\lambda^{-\frac{1}{4}}\right)
\end{multline*}
and the LHS of \eqref{relodd} is
\[
\frac{2}{\pi}\int_{-1}^1 \xi^{2m}\sqrt{1-\xi^2} d\xi \int_\beta f^2(t) dt+o_{\delta,m,f}(1)+O_{m,f}(\delta).
\]
Therefore Theorem \ref{awe} for odd eigenfunctions assuming QUE is
\begin{align*}
\lim_{k \to \infty} \lambda_{n}^{-2m}\int_\beta |\partial_t^m \left(f(t)Nu_{n}(t)\right) |^2 dt &= \frac{2}{\pi} \int_\beta f^2(t)dt \int_{-1}^1 \xi^{2m}\sqrt{1-\xi^2}d\xi\\
&=b_{2m} \int_\beta f^2(t)dt .
\end{align*}
Let $d\mu(\xi)=\pi^{-1}\sqrt{1-\xi^2}d\xi$. Observe that
\[
b_{2m}= \frac{2}{\pi} \int_{-1}^1 \xi^{2m}\sqrt{1-\xi^2}d\xi \leq \frac{2}{\pi} \int_{-1}^1 \sqrt{1-\xi^2}d\xi=1,
\]
and
\[
\limsup_{m \to \infty } \frac{1}{2m}b_{2m}^{\frac{1}{2m}}=0<+\infty,
\]
so by Lemma \ref{prob1}, $d\mu(\xi)$ is the only probability measure on $\mathbb{R}$ whose $2m$th moment is $b_{2m}$ and whose $(2m+1)$th moment is zero for any $m\geq 0$. Now note that
\[
\int e^{it\xi} d\mu(\xi) = J_1(t)/t
\]
and since $J_1(5)/5=-0.0655\ldots <0$, $d\mu(\xi)$ is not positive-definite, so we may apply Lemma \ref{prob3} to conclude
\begin{lemma}\label{lem2}
Assume that QUE holds for a sequence of odd eigenfunctions $\{u_{n}\}$. For any fixed segment $\beta \subset \mathrm{Fix}(\tau)$, all but finitely many $\partial_n u_{n}|_\beta$ has at least one sign change on $\beta$.
\end{lemma}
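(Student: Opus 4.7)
The proof follows the template of Lemma \ref{lem1}, but it is cleaner because the moment limits are pinned down outright rather than up to an unknown normalization. The plan is as follows. Assume for contradiction that there is a subsequence $\{u_{n_k}\}$ along which $\partial_n u_{n_k}|_\beta$ --- equivalently $Nu_{n_k} := \lambda_{n_k}^{-1}\partial_n u_{n_k}|_\beta$ --- has no sign change on $\beta$. Fix a non-negative test function $f \in C_0^\infty(\beta)$; then $f(t)Nu_{n_k}(t)$ is a sequence of functions in $C_0^\infty(\beta)$ that also has no sign change on $\beta$, and it suffices to derive a contradiction.

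Next I would invoke the odd-eigenfunction Rellich computation just established, which under QUE yields, for each fixed non-negative integer $m$,
\[
\lim_{k \to \infty} \lambda_{n_k}^{-2m} \int_\beta \bigl|\partial_t^m\bigl(f(t)Nu_{n_k}(t)\bigr)\bigr|^2\, dt = b_{2m} \int_\beta f^2(t)\, dt,
\]
with $b_{2m} = \frac{2}{\pi}\int_{-1}^1 \xi^{2m}\sqrt{1-\xi^2}\,d\xi$ the $(2m)$-th moments of the Wigner semicircle law $d\mu$. Crucially, unlike the even case treated in Lemma \ref{lem1}, no auxiliary parameter $a$ is required: the $m = 0$ identity already fixes $\|f\cdot Nu_{n_k}\|_{L^2(\beta)}^2 \to \int_\beta f^2\,dt$ since $b_0 = 1$. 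In order to apply Lemma \ref{prob3} to $\{f\cdot Nu_{n_k}\}$ with rescaling $a_k = \lambda_{n_k}$, I would verify its three hypotheses in turn: positivity of $b_0$; uniqueness of $d\mu$ from the moment sequence $(b_{2m})$, which follows from Lemma \ref{prob1} because $b_{2m} \leq 1$ yields $\limsup_m b_{2m}^{1/(2m)}/(2m) = 0$; and failure of positive-definiteness, which follows from the explicit characteristic function being a positive multiple of $J_1(t)/t$, negative at $t = 5$ as already noted ($J_1(5)/5 \approx -0.0655 < 0$).

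With these verifications in place, Lemma \ref{prob3} forces $f\cdot Nu_{n_k}$ to have at least one sign change on $\beta$ for all but finitely many $k$. Because $f \geq 0$, a sign change of $f \cdot Nu_{n_k}$ on $\beta$ is possible only if $Nu_{n_k}$ itself changes sign somewhere in the support of $f$, so $\partial_n u_{n_k}|_\beta$ changes sign on $\beta$, contradicting the initial assumption. There is no substantive new obstacle beyond what was already overcome in the proof of Theorem \ref{awe} and Lemma \ref{lem1}: the only essential change relative to the even case is that the limiting spectral density on $[-1,1]$ is the semicircle $\frac{2}{\pi}\sqrt{1-\xi^2}$ rather than the arcsine $\frac{1}{\pi}(1-\xi^2)^{-1/2}$, and the decisive fact --- that the semicircle is not positive-definite --- is a textbook property of its Bessel-function Fourier transform.
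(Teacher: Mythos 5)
Your proposal is correct and follows the paper's argument essentially verbatim: derive the moment limits $\lambda_{n}^{-2m}\|\partial_t^m(f\,Nu_n)\|^2_{L^2(\beta)} \to b_{2m}\int_\beta f^2$ from the odd-case Rellich identity under QUE, identify the unique limiting measure as the semicircle law via Lemma \ref{prob1}, observe that its characteristic function is a positive multiple of $J_1(t)/t$ which is negative at $t=5$, and apply Lemma \ref{prob3}. Your observation that the odd case is simpler because $b_0=1$ fixes the normalization outright, so the auxiliary parameter $a$ of Lemma \ref{lem1} is unnecessary, is accurate and matches how the paper proceeds.
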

As in Theorem \ref{thm2}, Lemma \ref{lem2} implies the following.
\begin{theorem}
Assume that QUE holds for a sequence of odd eigenfunctions $\{u_{n}\}$. Then
\[
\lim_{k \to \infty} \#\{x \in \mathrm{Fix}(\tau) ~:~(\partial_n u_{n})(x)=0\} = +\infty.
\]
\end{theorem}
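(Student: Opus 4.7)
The plan is to deduce this theorem from Lemma \ref{lem2} in exactly the same way that Theorem \ref{thm2} was deduced from Lemma \ref{lem1}: by exhausting $\mathrm{Fix}(\tau)$ by arbitrarily many pairwise disjoint segments and applying Lemma \ref{lem2} to each.

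More precisely, fix an arbitrary $N \in \mathbb{N}$ and choose pairwise disjoint closed segments $\beta_1,\ldots,\beta_N \subset \mathrm{Fix}(\tau)$ (this is possible since $\mathrm{Fix}(\tau)$ is a finite disjoint union of smooth closed curves of positive length, as $\tau$ is an orientation-reversing isometric involution of a surface). Lemma \ref{lem2} applies to each $\beta_i$ and yields an index $n_i = n_i(N)$ such that for every $n \geq n_i$, the function $\partial_n u_n|_{\beta_i}$ changes sign on $\beta_i$. Since $\partial_n u_n$ is continuous, each sign change produces at least one interior zero on $\beta_i$, and these zeros lie in disjoint segments, hence are distinct.

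Setting $n_0 = \max_{1 \leq i \leq N} n_i$, we conclude that for all $n \geq n_0$,
\[
\#\{x \in \mathrm{Fix}(\tau) ~:~(\partial_n u_n)(x)=0\} \;\geq\; N,
\]
and therefore
\[
\liminf_{n \to \infty} \#\{x \in \mathrm{Fix}(\tau) ~:~(\partial_n u_n)(x)=0\} \;\geq\; N.
\]
As $N$ was arbitrary, the limit is $+\infty$, as claimed.

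There is essentially no obstacle at this stage: all substantive work (the Rellich identity for odd eigenfunctions, the $L^2$ identity for $Nu_n = \lambda_n^{-1}\partial_n u_n|_\beta$ obtained from QUE, and the Bochner/Bessel-function argument via Lemmas \ref{prob1}--\ref{prob3} showing $d\mu(\xi)=\pi^{-1}\sqrt{1-\xi^2}\,d\xi$ is not positive-definite) has already been carried out to prove Lemma \ref{lem2}. The only point to verify is that $\mathrm{Fix}(\tau)$ really accommodates arbitrarily many disjoint segments, which is immediate from its structure as a 1-dimensional smooth submanifold.
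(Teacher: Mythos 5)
Your proof is correct and takes exactly the approach the paper intends: the paper simply states ``As in Theorem \ref{thm2}, Lemma \ref{lem2} implies the following,'' and your argument is precisely the proof of Theorem \ref{thm2} with $u_n$ replaced by $\partial_n u_n|_{\mathrm{Fix}(\tau)}$ and Lemma \ref{lem1} replaced by Lemma \ref{lem2}.
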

We now use the topological argument in \cite{GRS,JZ1} to conclude an analogy of Theorem \ref{thm1} for odd eigenfunctions.
\begin{theorem}
Assume that QUE holds for a sequence of odd eigenfunctions $\{u_{n}\}$. Then
\[
\lim_{k \to \infty} \mathcal{N}(u_{n}) \to +\infty.
\]
\end{theorem}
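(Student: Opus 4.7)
My plan is to reduce this theorem to the odd-case analogue of Theorem \ref{thm2} (proved immediately above) by establishing, via the same Euler-characteristic argument as in \S\ref{sec1}, the purely topological lower bound
\[
\mathcal{N}(u_n)\;\geq\;\tfrac{1}{2}\,\#\{x\in\mathrm{Fix}(\tau)\,:\,(\partial_n u_n)(x)=0\}+1-\mathfrak{g}.
\]
Once this is in hand, the conclusion $\mathcal{N}(u_n)\to+\infty$ is immediate from the preceding theorem.

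The first step is to set up the nodal graph of $u_n$ following Section \ref{sec1}. Because $u_n$ is odd with respect to $\tau$, the whole fixed set $\mathrm{Fix}(\tau)$ is contained in $Z_{u_n}$; since the tangential derivative of $u_n$ already vanishes identically on $\mathrm{Fix}(\tau)$, any zero of $\partial_n u_n|_{\mathrm{Fix}(\tau)}$ is in fact a singular point of $u_n$. By Lemma \ref{graph}, near such a point $x_0$ the nodal set is $C^1$-equivalent to $2N$ equiangular rays with $N\geq 2$, two of which lie along $\mathrm{Fix}(\tau)$ and at least two of which exit transversally. I would take these singular points together with the auxiliary vertices listed in items (1)--(3) of Section \ref{sec1} as the vertex set $V(u_n)$, and take for $E(u_n)$ the arcs of $Z_{u_n}\cup\mathrm{Fix}(\tau)$ between consecutive vertices. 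One must verify that no full subarc of $\mathrm{Fix}(\tau)$ gets collapsed to a single vertex, but this is immediate from the fact that singular points are isolated (by Lemma \ref{graph} and \cite{d}), so between two consecutive singular points on $\mathrm{Fix}(\tau)$ there is a genuine edge.

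The second step is to apply Euler's inequality $|V(u_n)|-|E(u_n)|+|F(u_n)|-m(u_n)\geq 1-2\mathfrak{g}$ together with the separating hypothesis on $\mathrm{Fix}(\tau)$ (which ensures every face meeting $\mathrm{Fix}(\tau)$ is split into two distinct nodal domains). The same bookkeeping that proves Lemma 6.4 of \cite{JZ1} in the even case shows that each additional zero of $\partial_n u_n$ on $\mathrm{Fix}(\tau)$ contributes at least one half to the face count, yielding the displayed inequality; the role played by sign changes of $u_n$ in the even case is played here by zeros of $\partial_n u_n$, and the degree-at-least-$2$ condition at every vertex that drives the estimate in \cite{JZ1} is preserved under this substitution.

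The substantive work -- extracting a growing number of zeros of $\partial_n u_n$ along $\mathrm{Fix}(\tau)$ from QUE via the odd Rellich identity \eqref{relodd} and the probabilistic Lemmas \ref{prob1}--\ref{prob3} -- has already been carried out in Lemma \ref{lem2} and the preceding theorem. Consequently the only step I must supply here is the soft topological reduction above, which contains no real obstacle beyond the care of verifying that the even-case Euler-inequality bookkeeping of \cite{JZ1} carries over verbatim with sign changes replaced by singular points on $\mathrm{Fix}(\tau)$.
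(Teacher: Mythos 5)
Your reduction to the immediately preceding theorem via the nodal-graph/Euler-inequality argument from \S\ref{sec1} and \cite{GRS,JZ1} is exactly the proof the paper intends (the paper simply cites the references rather than spelling out the bookkeeping). One small imprecision worth flagging: in the odd case $\mathrm{Fix}(\tau)\subset Z_{u_n}$, so the faces of the nodal graph \emph{are} the nodal domains with no artificial bisection by $\mathrm{Fix}(\tau)$ — the halving mechanism you invoke (``every face meeting $\mathrm{Fix}(\tau)$ is split into two distinct nodal domains'') belongs to the even case, where $\mathrm{Fix}(\tau)$ is appended to the nodal set to form the graph. In the odd case the count comes instead from the fact that each zero of $\partial_n u_n$ on $\mathrm{Fix}(\tau)$ is a singular point of $u_n$ (vanishing to order $\geq 2$), hence a vertex of degree $\geq 4$ in the nodal graph; Euler's inequality then directly bounds the number of faces from below by the number of such vertices up to $O_{\mathfrak g}(1)$. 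This changes the exact constant in your displayed inequality but not the conclusion, since any lower bound of the form $c\,\#\{x\in\mathrm{Fix}(\tau):\partial_n u_n(x)=0\}+O_{\mathfrak g}(1)$ with $c>0$ suffices to conclude $\mathcal N(u_n)\to+\infty$ from the preceding theorem.
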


\section{Proof of Theorem \ref{arith}}
We now prove Theorem \ref{arith} using Theorem \ref{thm1}. Let $\Gamma$ be a arithmetic triangle group and let $\mathbb{X}=\Gamma \backslash \mathbb{H}$. Let $\{\phi_j\}_j$ be the complete sequence of Hecke--Maass eigenforms on $\mathbb{X}$, i.e., it is a joint eigenfunction of $-\Delta_g$ and Hecke operators $\{T_n\}_{n\geq 1}$. It is shown in \cite{GRS2} that there exists an orientation-reversing isometric involution $\tau: \mathbb{X} \to \mathbb{X}$ such that $\mathrm{Fix} (\tau)$ is separating and that $\tau$ commutes with all $T_n$. From the multiplicity one theorem for Hecke eigenforms \cite{MR0268123}, the sequence of Hecke eigenvalues $\{\lambda_\phi(n)\}_{n \geq 1}$ of $T_n$ (i.e., $T_n\phi=\lambda_\phi(n) \phi$) determines $\phi$ uniquely. Hence any Hecke--Maass eigenform $\phi_j$ on $\mathbb{X}$ is an eigenfunction of $\tau$ so that we have either $\tau \phi_j =\phi_j$ or $\tau \phi_j=-\phi_j$ for all $j$. Now from the arithmetic Quantum Unique Ergodicity theorem by Lindenstrauss\cite{lin06}, QUE holds for $\{\phi_j\}_j$---hence we conclude that $\lim_{j \to +\infty} \mathcal{N}(\phi_j)=+\infty$ by Theorem \ref{thm1}.

\appendix
\section{Semiclassical analysis on manifolds}
In this section we review pseudo-differential operators and quantization of symbols on $T^*M$ from \cite{zss}. We refer the reader to \cite{zss} for details.
\subsection{The standard quantization of a symbol}
Recall that a class symbols for which we have invariance under coordinate changes is given by
\[
S^m = \left\{ a\in C^\infty(\mathbb{R}^{2n})~:~ |\partial_x^\alpha\partial_\xi^\beta a|\leq C_{\alpha\beta} |\xi|^{m-|\beta|}\ \forall\alpha,\beta\right\}.
\]
The index $m$, called the \emph{order}, controls the growth rate as $|\xi|\to \infty$. Let $\mathscr{S}$ be the Schwartz space given by
\[
\mathscr{S}=\mathscr{S}(\mathbb{R}^n)= \{\phi\in C^\infty(\mathbb{R}^n)~:~ \sup_{\mathbb{R}^n}|x^\alpha \partial^\beta\phi| <\infty \forall \alpha,\beta  \}.
\]
\begin{definition}[\S4.1 \cite{zss}]
We define the \textit{standard quantization}
\[
a(x,hD)u(x) := \frac{1}{(2\pi h)^n} \int_{\mathbb{R}^n}\int_{\mathbb{R}^n} e^{\frac{i}{h}\langle x-y,\xi \rangle}a(x,\xi) u(y) dyd\xi
\]
for $a \in S^{k,m}$ and for $u \in \mathscr{S}$. Here, we view $D=-i\partial$.
\end{definition}
For example if $a(x,\xi)=\sum_{|\alpha|<N}a_\alpha(x)\xi^\alpha$, then
\[
a(x,hD)=\sum_{|\alpha|\leq N} a_\alpha(x) (hD)^\alpha u=\sum_{|\alpha|\leq N}a_\alpha(x)(-ih\partial)^\alpha u.
\]

\subsection{Asymptotic series}

\begin{definition}[\S4.4.2 \cite{zss}]
Let $a_j\in S^{m}$, for $j\in\mathbb{Z}_{\geq 0}$. We say $a\in S^m$ is \emph{asymptotic} to $\sum_{j=0}^\infty h^ja_j$, written as $a\sim\sum_{j=0}^\infty h^ja_j$ in $S^m$, if
\[
\left|\partial_x^\alpha\partial_\xi^\beta\left(a-\sum_{j=0}^{N-1}h^ja_j\right)\right|\leq C_{\alpha\beta N}h^N|\xi|^{m-|\beta|}
\]
for all multiindices $\alpha,\beta$ and $N\in\mathbb{Z}_{>0}$. (Note that we are \emph{not} expecting for the series $\sum_{j=0}^\infty h^ja_j$ to converge in any sense.) If $a\sim\sum_{j=0}^\infty h^ja_j$ in $S^m$, then we say $a_0$ to be a \emph{principal symbol} of $a$. (Note that principal symbols may vary up to $O(h)$.)
\end{definition}

Any symbol has an asymptotic expression, namely $a\sim a+\sum_{j=1}^\infty h^j\cdot 0$. Conversely, given an asymptotic expression, we can always find a symbol that has that asymptotic expression.

\begin{theorem}[Borel's theorem, \cite{zss} Theorem 4.15] \label{thm:Borel}
Assume $a_j\in S^{m}$ for $j\in\mathbb{Z}_{\geq 0}$. Then there exists a symbol $a\in S^{m}$ such that $a\sim\sum_{j=0}^\infty h^ja_j$ in $S^{m}$. If also $\hat{a}\sim\sum_{j=0}^\infty h^ja_j$, then $a-\hat{a}=O_{S^{m},N}(h^N)$ for any $N\in\mathbb{Z}_{>0}$.
\end{theorem}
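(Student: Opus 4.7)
The plan is to construct the symbol $a$ as a rapidly-cutoff telescoping series. Define
\[
a(x,\xi,h) := \sum_{j=0}^{\infty} \chi\!\left(\lambda_j h\right) h^j a_j(x,\xi),
\]
where $\chi \in C_c^\infty(\mathbb{R})$ satisfies $\chi \equiv 1$ on $[-1/2,1/2]$ and $\mathrm{supp}\,\chi \subset [-1,1]$, and $\{\lambda_j\}_{j\geq 0}$ is a rapidly increasing sequence of positive reals to be chosen. For each fixed $h>0$ the sum is finite (only terms with $\lambda_j h \leq 1$ contribute), so $a$ is well-defined and smooth; the substance of the proof is to pick $\lambda_j$ making $a$ a genuine element of $S^m$ with the desired asymptotic behavior.

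The first step is to choose the $\lambda_j$. For each $j \geq 1$ and each multiindex pair $(\alpha,\beta)$ with $|\alpha|+|\beta| \leq j$, the symbol estimate for $a_j \in S^m$ provides a constant $B_{j,\alpha,\beta}$ with $|\partial_x^\alpha \partial_\xi^\beta a_j| \leq B_{j,\alpha,\beta} |\xi|^{m-|\beta|}$. I would take
\[
\lambda_j \;\geq\; 2^{j}\, \max_{|\alpha|+|\beta|\leq j}\, B_{j,\alpha,\beta},
\]
so that on the support of $\chi(\lambda_j h)$ (where $h \leq 1/\lambda_j$) one has the uniform gain
\[
h\,|\partial_x^\alpha \partial_\xi^\beta a_j(x,\xi)| \;\leq\; 2^{-j}\, |\xi|^{m-|\beta|}\qquad(|\alpha|+|\beta|\leq j).
\]

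Next I would verify the asymptotic expansion. Given $N \in \mathbb{Z}_{>0}$ and $(\alpha,\beta)$, decompose
\[
a \;-\; \sum_{j=0}^{N-1} h^j a_j \;=\; \underbrace{\sum_{j=0}^{N-1}\bigl(\chi(\lambda_j h)-1\bigr)h^j a_j}_{\mathrm{(I)}} \;+\; \underbrace{\sum_{j=N}^{\infty}\chi(\lambda_j h)\, h^j a_j}_{\mathrm{(II)}}.
\]
Term (I) vanishes identically for $h$ smaller than some threshold depending on $N$, since each factor $\chi(\lambda_j h)-1$ is zero for $\lambda_j h \leq 1/2$; thus it contributes $O(h^\infty)$. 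For term (II), for $j \geq \max(N, |\alpha|+|\beta|)$ the displayed estimate above and the bound $h \leq 1/\lambda_j$ on the support give $h^j |\partial^\alpha\partial^\beta a_j|\chi(\lambda_j h) \leq h^N\cdot 2^{-j} |\xi|^{m-|\beta|}$ after absorbing the extra $h^{j-N}$ powers into the reserve gained from $\lambda_j$ (each extra factor of $h$ on the support costs $1/\lambda_j$, which we made exponentially small in $j$). The finitely many remaining terms $N \leq j < |\alpha|+|\beta|$ are handled individually and each contributes $O(h^j) = O(h^N)$. Summing the geometric tail yields $|\partial_x^\alpha\partial_\xi^\beta(\mathrm{II})| \leq C_{\alpha,\beta,N}\, h^N |\xi|^{m-|\beta|}$, establishing both $a \in S^m$ (take $N=0$) and $a \sim \sum h^j a_j$.

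For the uniqueness clause, if $\hat{a} \in S^m$ also satisfies $\hat{a} \sim \sum_j h^j a_j$, then the difference $a - \hat{a}$ is asymptotic to the zero series: for every $N$ and every $(\alpha,\beta)$ one has
\[
|\partial_x^\alpha\partial_\xi^\beta(a-\hat{a})| \leq |\partial_x^\alpha\partial_\xi^\beta(a - \textstyle\sum_{j<N} h^j a_j)| + |\partial_x^\alpha\partial_\xi^\beta(\textstyle\sum_{j<N} h^j a_j - \hat{a})| \leq C\, h^N |\xi|^{m-|\beta|},
\]
which is exactly the assertion $a - \hat{a} = O_{S^m,N}(h^N)$ for every $N$. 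The main technical point to watch is the trade-off in the choice of $\{\lambda_j\}$: one must absorb arbitrarily many derivatives of $a_j$ while maintaining the correct power $h^N$ in the tail sum, and this is what forces the double requirement that $\lambda_j$ dominate both $2^j$ and the $S^m$-seminorms of $a_j$ up to order $j$.
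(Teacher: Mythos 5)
Your construction---the truncated telescoping sum $a=\sum_j\chi(\lambda_j h)\,h^j a_j$ with $\lambda_j$ chosen to dominate the first $j$ seminorms of $a_j$, together with the triangle-inequality argument for uniqueness---is the standard Borel-summation proof, and it is essentially the proof of Theorem 4.15 in Zworski's book, which is the source the paper cites for this statement without reproducing an argument of its own. Your reasoning is correct; the only loose thread is that the $j=N$ term of the tail contributes $B_{N,\alpha,\beta}\,h^N|\xi|^{m-|\beta|}$ rather than the advertised $2^{-N}h^N|\xi|^{m-|\beta|}$ (the gain from $\lambda_j$ covers only one of the $j-N$ surplus powers of $h$ when $j=N$), but this single term is harmlessly absorbed into the constant $C_{\alpha,\beta,N}$ and the geometric tail takes over for $j\geq N+1$.
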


\subsection{Pseudodifferential operators on manifolds}

\begin{definition}[\S14.2.2 of \cite{zss}]
A linear operator
\[
A:C^\infty(M) \to C^\infty (M)
\]
is called a \textit{pseudo-differential operator} of order $m$, if $m\in\mathbb{Z}$ is such that for each coordinate patch $(U,p)$, there exists a symbol $a \in S^{m}(\mathbb{R}^{2n})$ such that for any $\phi,\psi\in C_0^\infty (U)$ and for each $u \in C^\infty(M)$,
\[
\phi A(\psi u)=\phi p^* a(x,hD)(p^{-1})^* (\psi u),
\]
and if for any $\phi_j \in C_0^\infty (M)$, $j=1,2$ satisfying $\mathrm{supp}(\phi_1) \cap \mathrm{supp}(\phi_2) = \emptyset$, we have
\[
\phi_1 A \phi_2 = O_N(h^N):H^{-k}(U_2) \to H^k (U_1),
\]
for all $k\in\mathbb{Z}_{\geq 0}$ and $N\in\mathbb{Z}_{>0}$, where $U_j\subset\subset M$ are open neighbourhoods of $\mathrm{supp}(\phi_j)$, $j=1,2$, and $H^{\pm k}(U_j)$ are the Sobolev spaces
\[
H^k(U_1)=\left\{u\in L^2(U_1){\Bigg |}\forall 1\leq l\leq k\forall V_1,\cdots,V_l\in\Gamma(U_1,TM);V_1\cdots V_l u\in L^2(U_1)\right\},
\]
\[
H^{-k}(U_2)=\mathrm{span}_\mathbb{C}\left\{V_1\cdots V_l f~{\Bigg |}~0\leq l\leq k,V_1,\cdots,V_l\in \Gamma(U_2,TM),f\in L^2(U_2)\right\}.
\]
(Here, $\Gamma(U,TM)$ is the collection of all smooth vector fields on $U$.) 
The collection of all pseudo-differential operators on $M$ of order $m$ is denoted $\Psi^{m}(M)$.
\end{definition}

\begin{definition}[\S14.2.3 of \cite{zss}]
We say $a\in S^m(T^\ast M)$, $a$ being a \emph{symbol} on $T^\ast M$ of order $m$, if $a\in C^\infty(T^\ast M)$ and for each coordinate patch $(U,p)$, under the pullback through the identification and inclusion $p[U]\times\mathbb{R}^n\xrightarrow{\sim}T^\ast U\subset T^\ast M$, $a$ becomes a symbol in $S^m(p[U]\times\mathbb{R}^n)$.
\end{definition}

The following theorem describes how quantizations on manifolds works, i.e., how we relate symbols and pseudo-differential operators. This also suggests a way to detect the \emph{principal symbol}.

\begin{theorem}[Theorem 14.1 of \cite{zss}]
There exist linear maps $\sigma\colon\Psi^m(M)\rightarrow S^m(T^\ast M)/hS^{m-1}(T^\ast M)$ and $\mathrm{Op}\colon S^m(T^\ast M)\rightarrow\Psi^m(M)$ such that $\sigma(A_1A_2)=\sigma(A_1)\sigma(A_2)$ and $\sigma(\mathrm{Op}(a))=[a]\in S^m(T^\ast M)/hS^{m-1}(T^\ast M)$. Here, $[a]$ denotes the equivalence class represented by $a\in S^m(T^\ast M)$ in the quotient ring
\[
S^m(T^\ast M)/hS^{m-1}(T^\ast M).
\]
\end{theorem}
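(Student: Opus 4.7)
The plan is to construct $\sigma$ and $\mathrm{Op}$ through standard quantization in coordinate patches, glued together by a partition of unity, and then to verify the two algebraic properties via the local composition calculus for $a(x,hD)$ on $\mathbb{R}^n$.

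First I would construct the principal symbol map $\sigma$. Given $A \in \Psi^m(M)$, choose a locally finite coordinate atlas $\{(U_\alpha, p_\alpha)\}$ together with a subordinate partition of unity $\{\chi_\alpha\}$ and cutoffs $\tilde\chi_\alpha \in C_0^\infty(U_\alpha)$ with $\tilde\chi_\alpha \equiv 1$ on $\mathrm{supp}(\chi_\alpha)$. By the definition of $\Psi^m(M)$ there exist symbols $a_\alpha \in S^m(\mathbb{R}^{2n})$ representing $\tilde\chi_\alpha A \tilde\chi_\alpha$ in the coordinates $p_\alpha$. The key input is the \emph{coordinate-change formula} for pseudodifferential operators on $\mathbb{R}^n$: if $\kappa$ is a diffeomorphism between open subsets of $\mathbb{R}^n$ and one conjugates $a(x,hD)$ by $\kappa^*$, then the resulting operator has a symbol whose leading term (in powers of $h$) is $a(\kappa(y),((d\kappa)^T)^{-1}\eta)$, with all further corrections lying in $hS^{m-1}$. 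Applied to transition maps $p_\beta \circ p_\alpha^{-1}$, this shows that the functions $a_\alpha$ patch together, modulo $hS^{m-1}(T^*M)$, to a well-defined element of $S^m(T^*M)/hS^{m-1}(T^*M)$, which we declare to be $\sigma(A)$. Independence from the choice of atlas, partition of unity, and representing symbols follows again from the same coordinate-change formula together with the $O(h^\infty)$ off-diagonal hypothesis in the definition of $\Psi^m(M)$.

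Next I would construct $\mathrm{Op}$. Given $a \in S^m(T^*M)$, set
\[
\mathrm{Op}(a)u \;:=\; \sum_\alpha \tilde\chi_\alpha \cdot p_\alpha^*\bigl((\chi_\alpha a)\!\circ\! p_\alpha^{-1}\bigr)(x,hD)(p_\alpha^{-1})^*(\tilde\chi_\alpha u),
\]
where $(\chi_\alpha a)\circ p_\alpha^{-1}$ is viewed as a compactly $x$-supported element of $S^m(\mathbb{R}^{2n})$ via the trivialization $T^*U_\alpha \simeq p_\alpha(U_\alpha)\times\mathbb{R}^n$. Linearity is clear, and one checks the two conditions in the definition of $\Psi^m(M)$: in any coordinate patch, $\mathrm{Op}(a)$ is a finite sum of standard quantizations conjugated by diffeomorphisms, so again by the coordinate-change formula it is represented by a symbol in $S^m(\mathbb{R}^{2n})$; and the off-diagonal $O(h^\infty)$ bound follows from non-stationary phase applied to the oscillatory integral defining $a(x,hD)$ when the $x$ and $y$ supports are disjoint. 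By construction the local symbol of $\mathrm{Op}(a)$ in the patch $U_\alpha$ equals $a$ restricted to $T^*U_\alpha$ modulo $hS^{m-1}$, so $\sigma(\mathrm{Op}(a)) = [a]$.

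Finally I would verify multiplicativity $\sigma(A_1 A_2) = \sigma(A_1)\sigma(A_2)$. Since both sides are classes in $S^m/hS^{m-1}$, it suffices to check the identity in each coordinate patch, where it reduces to the composition formula in $\mathbb{R}^n$:
\[
a(x,hD)\circ b(x,hD) \;=\; (a\# b)(x,hD), \qquad a\# b \;\sim\; \sum_{|\alpha|\geq 0} \frac{(-ih)^{|\alpha|}}{\alpha!}\,\partial_\xi^\alpha a\,\partial_x^\alpha b,
\]
whose principal part ($\alpha=0$) is $ab$; the remaining terms contribute to $hS^{m-1}$ and hence vanish in the quotient. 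Combined with the fact that the change-of-variables corrections also lie in $hS^{m-1}$, one gets $\sigma(A_1 A_2) \equiv \sigma(A_1)\sigma(A_2)$ modulo $hS^{m-1}$.

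The main obstacle, as in most of pseudodifferential analysis on manifolds, is the coordinate-change formula: one must show that conjugating $a(x,hD)$ by a diffeomorphism yields another pseudodifferential operator whose full symbol admits an asymptotic expansion whose leading term is the pullback of $a$ by the induced symplectic map on $T^*\mathbb{R}^n$. This is proved by writing the conjugated operator as an oscillatory integral, applying stationary phase in the fibre variable, and invoking Borel's theorem (Theorem \ref{thm:Borel}) to realize the resulting formal asymptotic series as an honest symbol. Once this local statement is in hand, the global construction of $\sigma$ and $\mathrm{Op}$, and both algebraic properties, follow by routine partition-of-unity arguments.
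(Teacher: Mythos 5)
The paper does not give a proof of this statement; it quotes it verbatim as Theorem 14.1 of Zworski's \emph{Semiclassical Analysis} in its background appendix and uses it as a black box. Your outline---local quantization patched by a partition of unity, the coordinate-change (conjugation-by-diffeomorphism) formula established via stationary phase and Borel's theorem, the off-diagonal $O(h^\infty)$ bound from non-stationary phase, and multiplicativity of $\sigma$ read off from the local composition expansion $a\,\#\,b\sim ab+O(h)$---is correct and is precisely the standard argument from that reference, so there is no genuine divergence to report.
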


Recall that, locally, a symbol $a$ has a principal symbol that may differ up to $O(h)$. This is captured by the fact that $\sigma(\mathrm{Op}(a))=[a]$. Likewise, it is harmless to define, given a pseudo-differential operator $A\in\Psi^m(M)$, its \emph{principal symbol} as a representative of $\sigma(A)$. Since any two representatives of $[a]\in S^m(T^\ast M)/hS^{m-1}(T^\ast M)$ have $O(h)$ difference, this implies that the principal symbol is independent to the quantization (that is, any two choice shows no difference as $h\rightarrow 0$).

\subsection{QUE for a symbol of finite order}\label{finite}
\begin{lemma}[Theorem 6.4 of \cite{zss}]\label{cutoff}
Let $(M,g)$ be a smooth compact Riemannian manifold and suppose that $u \in L^2(M,g)$ satisfies
\[
-h^2\Delta_g u = E(h)u.
\]
Assume as well that $a \in S^m$ is a symbol satisfying, for some $E>0$,
\[
\mathrm{supp}(a)\cap\{(x,\xi) \in T^*M ~:~ |\xi|_x^2 = E\}=\emptyset.
\]
Then if $|E(h)-E| < \delta$ for some sufficiently small constant $\delta>0$, we have the estimate
\[
\|a(x,hD)u\|_{L^2}=O_N(h^{N})\|u\|_{L^2}
\]
for any $N>0$.
\end{lemma}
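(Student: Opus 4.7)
The plan is a standard semiclassical elliptic parametrix construction for the operator $P-E(h)\,\mathrm{Id}$, where $P:=-h^2\Delta_g$. Writing $p(x,\xi)=|\xi|_x^2$ for the principal symbol of $P$, the hypothesis $\mathrm{supp}(a)\cap\{p=E\}=\emptyset$ provides some $d>0$ with $|p-E|\geq d$ on $\mathrm{supp}(a)$. Taking $\delta<d/2$, the perturbed symbol $p_E(x,\xi):=p(x,\xi)-E(h)$ satisfies $|p_E|\geq d/2$ on $\mathrm{supp}(a)$ uniformly in $h$, so $P-E(h)\mathrm{Id}$ is semiclassically elliptic on $\mathrm{supp}(a)$. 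Since $(P-E(h)\mathrm{Id})u=0$, producing a parametrix for $P-E(h)\mathrm{Id}$ that recovers $\mathrm{Op}(a)$ modulo $O(h^N)$ will immediately give the claim.

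For the construction, fix a smooth function $\chi$ on $T^*M$ with $\chi\equiv 1$ on a neighborhood of $\mathrm{supp}(a)$ and $\chi\equiv 0$ on a neighborhood of $\{p=E\}$; such a $\chi$ exists because the two sets are disjoint and closed. Set $b_0:=\chi\cdot a/p_E\in S^{m-2}$; this is a bona fide symbol because $|p_E|\gtrsim 1+|\xi|^2$ on $\mathrm{supp}(\chi)$. The composition formula of the symbol calculus gives
\[
\mathrm{Op}(b_0)\cdot(P-E(h)\mathrm{Id})=\mathrm{Op}(a)+h\,\mathrm{Op}(r_1),
\]
with $r_1\in S^{m-1}$ again supported in $\mathrm{supp}(\chi)$. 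Iterating, at step $j$ I set $b_j:=-\chi\cdot r_j/p_E\in S^{m-2-j}$, which cancels the $h^j$-order principal term and leaves a remainder of order $h^{j+1}$ times a symbol in $S^{m-j-1}$. Borel's theorem (Theorem~\ref{thm:Borel}) assembles $b\sim\sum_{j\geq 0}h^jb_j$ into a single symbol $b\in S^{m-2}$, and truncating at order $N$ yields
\[
\mathrm{Op}(b)\,(P-E(h)\mathrm{Id})=\mathrm{Op}(a)+h^N\mathrm{Op}(r_N),\qquad r_N\in S^{m-N}.
\]
Choosing $N>m$ makes $\mathrm{Op}(r_N)$ bounded on $L^2$ with norm uniform in $h$.

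Applying this operator identity to $u$ and using $(P-E(h)\mathrm{Id})u=0$ gives
\[
\mathrm{Op}(a)u=-h^N\mathrm{Op}(r_N)u,
\]
so $\|\mathrm{Op}(a)u\|_{L^2}\leq C_N h^N\|u\|_{L^2}$, as claimed. The main obstacle is ensuring that the entire construction is uniform in $h$: every $b_j$ depends on $E(h)$ through $p_E$, so the $S^{m-2-j}$-seminorms of the $b_j$'s and of the remainders $r_j$ must be controlled independently of $h$. This uniformity follows from the uniform lower bound $|p_E|\geq d/2$ on $\mathrm{supp}(\chi)$, valid for all $h$ with $|E(h)-E|<\delta$, together with the smoothness of the composition formula in its parameters.
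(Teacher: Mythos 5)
Your argument is correct but takes a genuinely different route from the paper. You build a \emph{microlocal} left parametrix for $P-E(h)$ directly in the region $\mathrm{supp}(a)$: set $b_0=\chi a/p_E$, iterate to kill lower-order terms, Borel-sum, and obtain $\mathrm{Op}(b)(P-E(h))=\mathrm{Op}(a)+O_N(h^N)$, which gives the claim the moment you hit it with $u$. This is the standard ``semiclassical elliptic estimate'' scheme and it is cleaner than what the paper does. The paper instead makes the operator \emph{globally} elliptic by hand: after a partition of unity and a spatial cutoff $\phi$, it adds artificial terms $-E(h)\phi+(1-\phi)+i\chi$ to the local symbol of $-h^2\Delta_g$ so that the resulting $b$ satisfies $|b|\gtrsim 1+|\xi|^2$ on all of $\mathbb{R}^{2n}$, builds a two-sided parametrix $c$ with $c(x,hD)b(x,hD)=I+O_N(h^N)$, and then peels the artificial terms off again using disjoint-support estimates (the $i\chi$ term, the region $\{\phi\neq 1\}$). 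Your approach avoids the artificial modifications entirely, at the price of invoking a slightly more refined fact: that the remainders $r_j$ in the iteration inherit support in $\mathrm{supp}(\chi)$, so the divisions by $p_E$ stay legal. That support statement is clear in local coordinates (every term in the Moyal expansion contains a derivative of $b_0$) and holds on the manifold up to $O(h^\infty)$; the paper avoids having to say this by making $1/b$ globally defined. The one thing you gloss over is the coordinate-patch reduction: the full-symbol composition formula underlying the iteration is a local statement, so strictly one should localize via a partition of unity as the paper does before speaking of $r_j\in S^{m-1-j}$ as a single symbol. That is routine and does not affect the substance, but it is worth stating. Your uniformity-in-$h$ remark is exactly the right thing to flag, and the lower bound $|p_E|\geq d/2$ on $\mathrm{supp}(\chi)$ (together with $|p_E|\gtrsim|\xi|^2$ for large $|\xi|$) does control all the $S^{m-2-j}$-seminorms of the $b_j$'s uniformly in $h$.
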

\begin{proof}
Let $\{(U_i,p_i)\}_{i\in I}$ be a finite set of coordinate patch such that $\bigcup_{i \in I}U_i = M$, and $p_i[U_i]\subset\subset\mathbb{R}^n$. Let $\{\psi_j\}_{j \in J}$ be a partition of unity such that for any $j \in J$, $\mathrm{supp}(\psi_j) \subset\subset U_{i}$ for some $i\in I$. Then it is sufficient to prove the Theorem for each $\psi_j (x) a(x,\xi)$.

Fix $j$ and $i$ such that $\mathrm{supp}(\psi_j) \subset U_i$. Let $\tilde{a}$ be a symbol in $S^m(p_i[U_i]\times \mathbb{R}^n)$ which is a pullback of $\psi_j(x)a(x,\xi)$ under the identification $p_i[U_i]\times\mathbb{R}^n\xrightarrow{\sim}T^\ast U_i$.

Our goal is to estimate the norm of $\psi_j(x)a(x,hD)u\in L^2(M)$. However, the function $\psi_j(x)a(x,hD)u$ is supported on $\mathrm{supp}(\psi_j)$, so it is harmless to view
\[
\psi_j(x)a(x,hD)u=\tilde{a}(x,hD)u\in L^2(\mathbb{R}^n).
\]
Now, set a smooth $\phi\colon \mathbb{R}^n\rightarrow[0,1]$, whose support $\subset\subset p_i[U_i]$, and $\phi\equiv 1$ on some neighborhood of $\mathrm{supp}(\psi_j)$. In particular, $\mathrm{supp}(1-\phi)\cap\mathrm{supp}(\psi_j)=\emptyset$. Then by the fact that $a(x,hD)$ is a pseudo-differential operator, we have
\begin{eqnarray*}
\psi_j(x)a(x,hD)u &=& \psi_j(x)a(x,hD)(\phi u)+\psi_j(x)a(x,hD)((1-\phi)u) \\
&=& \psi_j(x)a(x,hD)(\phi u)+O_N(h^N)\|u\|_{L^2(M)}.
\end{eqnarray*}
We thus claim that $\tilde{a}(x,hD)(\phi u)=O_N(h^N\|u\|_{L^2})$.

Identify $g$ with the pullback tensor of $g|_{U_i}$ through the identification $p_i[U_i]\xrightarrow{\sim}U_i$. Then, we can extend $g$ to be a metric tensor on all $\mathbb{R}^n$, such that $\Lambda I\geq g\geq\Lambda^{-1}I$ for some $\Lambda>1$. Write $g=(g_{k\ell})_{k,\ell}$, so that $g^{-1}=(g^{k\ell})_{k,\ell}$.

Set $\chi\colon\mathbb{R}^{n}\times\mathbb{R}^{n}\rightarrow[0,1]$ to be a smooth, compactly supported function with $\chi\equiv 1$ on $\{(x,\xi)\in p_i[U_i]\times\mathbb{R}^n~|~E-\delta\leq|\xi|_x^2\leq E+\delta\}$ and $\chi\equiv 0$ on $\mathrm{supp}(\tilde{a})$. Set a symbol $b\in S^2$ by
\[
b=\sum_{k,\ell=1}^n\left(\frac{1}{\sqrt{\det g}}\frac{h}{i}\frac{\partial(\sqrt{\det g}g^{k\ell})}{\partial x_k}\xi_\ell+g^{k\ell}\xi_k\xi_\ell\right)-E(h)\phi+(1-\phi)+i\chi,
\]
where $\phi$ is the cutoff function introduced above; this is added to make sure that $|b(x,\xi)|\geq\gamma(1+|\xi|^2)$ for some $\gamma>0$. In particular, $1/b\in S^0$ holds.

Now recursively define symbols $c_0,c_1,\cdots$ by, $c_0=1/b$ and
\[
c_j=-\frac{1}{b}\sum_{1\leq|\alpha|\leq j}\frac{1}{\alpha!}\partial_\xi^\alpha c_{j-|\alpha|}D_x^\alpha b.
\]
Here, one verifies that $c_j\in S^0$ for all $j$. Now define a symbol $c\in S^{0}$ with asymptotic $c\sim\sum_{j=0}^\infty h^jc_j$, appealing to Borel's theorem (Theorem \ref{thm:Borel} above). Now by \cite[Theorem 9.5]{zss}, if we set
\[
s(x,\xi)=e^{ih\langle D_\xi, D_y\rangle}c(x,\xi)b(y,\eta)|_{\substack{ y=x \\ \eta=\xi}}
\]
then we have $c(x,hD)b(x,hD)=s(x,hD)$. This $s$ satisfies, for any $N\geq 3$, by the same theorem,
\[
s(x,\xi)=\sum_{|\alpha|\leq N}\frac{1}{\alpha!}\partial_\xi^\alpha\left(\sum_{j=0}^\infty h^jc_j\right)(hD_x)^\alpha b +O_N(h^{N+1})=1+O_N(h^N),
\]
by the virtue of the definitions of $c_j$'s. By the operator size estimate \cite[Theorem 4.23]{zss} we have $s(x,hD)=I+O_N(h^N)$ for every $N\geq 3$, so we have
\[
\tilde{a}(x,hD)c(x,hD)b(x,hD)=\tilde{a}(x,hD)+O_{N}(h^N)
\]
consequently. Furthermore, since $\mathrm{supp}(\tilde{a})\cap\mathrm{supp}(\chi)=\emptyset$, we have  \cite[Theorem 4.25]{zss}, or by direct computation using \cite[Theorem 9.5]{zss})
\[
\tilde{a}(x,hD)c(x,hD)\chi(x,hD)=O_{N}(h^N).
\]
Now we estimate the $L^2$ norm of $\tilde{a}(x,hD)(\phi u)$. First, on $\mathrm{supp}(\psi_j)$, as $b(x,hD)=-h^2\Delta_g-E(h)+i\chi(x,hD)$ on there,
\begin{align*}
&\tilde{a}(x,hD)(\phi u) \\
=& \tilde{a}(x,hD)c(x,hD)b(x,hD)(\phi u)+O_N(h^N\|u\|_{L^2}) \\
=& \tilde{a}(x,hD)c(x,hD)(-h^2\Delta_g-E(h)+i\chi(x,hD))(\phi u)+O_N(h^N\|u\|_{L^2}) \\
=& \tilde{a}(x,hD)c(x,hD)(-h^2\Delta_g-E(h))(\phi u)+O_N(h^N\|u\|_{L^2}).
\end{align*}
Here, note that $\tilde{a}(x,hD)c(x,hD)=s_1(x,hD)$, for some symbol $s_1$ supported on $\mathrm{supp}(\psi_j)\times\mathbb{R}^n$ (by  \cite[Theorem 9.5]{zss}). However, as $(-h^2\Delta_g-E(h))(\phi u)=0$ on $\{\phi=1\}\supset\supset\mathrm{supp}(\psi_j)$, this support disagreement implies that
\[
\tilde{a}(x,hD)c(x,hD)(-h^2\Delta_g-E(h))(\phi u)=O_N(h^N)\|u\|_{L^2}.
\]
Together with $\tilde{a}(x,hD)(\phi u)\equiv0$ on $\mathbb{R}^n\setminus\mathrm{supp}(\psi_j)$, we conclude that
\[
\left\|\tilde{a}(x,hD)(\phi u)\right\|_{L^2}=O_N(h^N)\|u\|_{L^2}.\qedhere
\]
\end{proof}
We now consider the eigenvalue problem $-\Delta_g u = \lambda^2 u$. In this context, we define the action of $a$ on $u$ as follows:
\[
\mathrm{Op}(a)u = a\left(x,\frac{1}{\lambda}D\right)u.
\]
\begin{lemma}\label{cutoff2}
Let $(M,g)$ be a smooth compact Riemannian manifold and suppose that $u \in L^2(M,g)$ satisfies
\[
-\Delta_g u = \lambda^2  u.
\]
Assume that $a \in C_0^\infty (T^*M)$ satisfies
\begin{equation}\label{cond}
\mathrm{supp} (a(x,\xi)) \subset \{(x,\xi) \in T^*M~:~ 1-\delta <|\xi|_x^2<1+\delta \}
\end{equation}
for some $\delta>0$ and
\begin{equation}\label{cond2}
a(x,\xi)=0~~ \mathrm{if}~ |\xi|_x=1.
\end{equation}
Then
\[
\|\mathrm{Op}(a) u\|_{L^2(M)} = O(\lambda^{-1}).
\]
\end{lemma}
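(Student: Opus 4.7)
The plan is to exploit the vanishing of $a$ on the characteristic variety $\{|\xi|_x = 1\}$ by factoring $a$ through the principal symbol of $P := -h^2\Delta_g - 1$, where $h = \lambda^{-1}$, and then using the eigenvalue equation $Pu = 0$ to gain one power of $h$. The target estimate $\|\mathrm{Op}(a) u\|_{L^2} = O(\lambda^{-1})$ is precisely the first-order improvement one expects from having $a$ vanish on (rather than merely being supported away from) the unit cosphere bundle.

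First I would invoke the smooth division theorem to write
\[
a(x,\xi) = (|\xi|_x^2 - 1)\, b(x,\xi),
\]
with $b \in C_0^\infty(T^*M)$. This is legitimate because the defining function $|\xi|_x^2 - 1$ of the characteristic hypersurface has non-vanishing $\xi$-gradient there (namely $2g^{kl}\xi_k\, d\xi_l$), and because the support hypothesis \eqref{cond} forces $\mathrm{supp}(b) \subset \overline{\mathrm{supp}(a)}$, keeping $b$ compactly supported in $T^*M$.

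Next I would apply semiclassical pseudodifferential calculus. The operator $P = -h^2\Delta_g - 1$ has principal symbol $p(x,\xi) = |\xi|_x^2 - 1$, so $P = \mathrm{Op}(p) + h R_0$ with $R_0 \in \Psi^1(M)$. The product formula then gives
\[
\mathrm{Op}(b)\, P = \mathrm{Op}(bp) + h\, R_1 = \mathrm{Op}(a) + h\, R_1,
\]
where $R_1$ collects all $O(h)$ contributions from the Moyal expansion of $\mathrm{Op}(b)\mathrm{Op}(p)$ together with $\mathrm{Op}(b) R_0$. Because $b$ and all of its $\xi$-derivatives are compactly supported in $\xi$, each error symbol that arises remains compactly supported in $\xi$, so $R_1$ is an operator that is bounded on $L^2(M)$ with norm uniform in $h$. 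Applying this identity to the eigenfunction $u$ and using $Pu = 0$ yields
\[
\mathrm{Op}(a)\,u = \mathrm{Op}(b)\, Pu - h R_1 u = -h R_1 u,
\]
so $\|\mathrm{Op}(a) u\|_{L^2} \leq C h \|u\|_{L^2} = O(\lambda^{-1})$, as desired.

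The only real technical obstacle is executing the symbolic calculus globally on $M$. My plan, mirroring the approach already used in the proof of Lemma \ref{cutoff}, is to take a finite coordinate cover $\{(U_i, p_i)\}$ of $M$ with a subordinate partition of unity $\{\psi_j\}$, split $a = \sum_j \psi_j a$, and carry out the composition $\mathrm{Op}(\psi_j b)\,P$ in each local chart using the standard Weyl-style product expansion in $\mathbb{R}^n$, together with the off-diagonal negligibility $\phi_1 A \phi_2 = O_N(h^N)$ for disjointly supported cutoffs. The key point to verify is that the commutators and remainders produced this way are given by symbols whose $\xi$-support stays inside the compact set $\mathrm{supp}(b)$, so that the corresponding operators are bounded on $L^2$ uniformly in $h$; this is immediate from the composition formulae already recalled in the appendix.
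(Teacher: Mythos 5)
Your proposal is correct and is essentially the same argument as the paper's: both factor $a = (|\xi|_x^2 - 1)b$ with $b$ smooth and compactly supported, compose $\mathrm{Op}(b)$ with $P = -h^2\Delta_g - 1$ to recover $\mathrm{Op}(a)$ up to an $O(h)$ remainder bounded on $L^2$, and then use $Pu = 0$ (or, in the paper's localized version, the support disagreement between $\mathrm{supp}(b)$ and $P(\phi u)$) to kill the leading term. The localization via $\{\psi_j\}$, $\phi$ that you defer to Lemma \ref{cutoff} is exactly what the paper does.
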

\begin{proof}
We use the same notation as in the proof of Lemma \ref{cutoff}, where the only difference is that the symbol $a$ now satisfies \eqref{cond} and \eqref{cond2}. Hence, it is sufficient to prove that
\[
\|\mathrm{Op}(\tilde{a})(\phi u)\|_{L^2} = O(\lambda^{-1}\|\phi u\|_{L^2}),
\]
where $\tilde{a} (x,\xi) = 0$ if $|\xi|_x=1$. Let $b \in C_0^\infty (\mathbb{R}^{2n})$ be given by
\[
(|\xi|_x^2-1)b(x,\xi)=\tilde{a}(x,\xi).
\]
Note that $b$ has support in $\mathrm{supp}(\psi_j)\times\mathbb{R}^n$, and that $(-\frac{1}{\lambda^2}\Delta_g-1)(\phi u)=0$ on $\mathrm{supp}(\psi_j)\subset\subset\{x~:~\phi(x)=1\}$. Therefore by \cite[Theorem 4.25]{zss}, we have
\[
\|\mathrm{Op}(b)(-\frac{1}{\lambda^2}\Delta_g-1)(\phi u)\|_{L^2}=O_N(\lambda^{-N}\|\phi u\|_{L^2}),
\]
for any $N \in \mathbb{N}$. Now observe that
\begin{align*}
\|\mathrm{Op}(b)(-\frac{1}{\lambda^2}\Delta_g -1)(\phi u)\|_{L^2} = \|\mathrm{Op}(b) \mathrm{Op}(|\xi|_x^2-1)(\phi u)\|_{L^2} + O(\lambda^{-1}\|\phi u\|_{L^2}),
\end{align*}
and from  \cite[Theorem 4.14]{zss} that
\begin{align*}
 \|\mathrm{Op}(b) \mathrm{Op}(|\xi|_x^2-1)(\phi u)\|_{L^2} &=\| \mathrm{Op}(b(|\xi|_x^2-1))(\phi u)\|_{L^2} + O(\lambda^{-1}\|\phi u\|_{L^2})\\
 &=\| \mathrm{Op}(\tilde{a})(\phi u)\|_{L^2} + O(\lambda^{-1}\|\phi u\|_{L^2}).
\end{align*}
We therefore conclude that
\[
\| \mathrm{Op}(\tilde{a})(\phi u)\|_{L^2}=O(\lambda^{-1}\|\phi u\|_{L^2}). \qedhere
\]
\end{proof}
A homogeneous symbol of degree $k$ is defined to be a function
\[
a(x,\xi) \in C^\infty (T^*M-\{0\})
\]
such that $a(x,t\xi) = t^k a(x,\xi)$ for any $t>0$.
\begin{theorem}[Quantum Ergodicity Theorem \cite{snirel,cd22,zeld1}]\label{thm:qe02}
Assume ergodic geodesic flow on $M$. For any given orthonormal eigenbasis $\{u_n\}$ of $L^2(M)$, there exists a density $1$ subsequence $\{u_{n_k}\} \subset \{u_n\}$ such that for any homogeneous symbol $a$ of degree $0$, we have
\begin{equation}\label{homo}
\lim_{k \to \infty }\langle \mathrm{Op}(a)u_{n_k} , u_{n_k}\rangle = \int_{S^*M} a(x,\xi) dxd\xi.
\end{equation}
\end{theorem}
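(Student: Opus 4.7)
The plan is to carry out the classical Shnirelman--Zelditch--Colin de Verdiere argument, whose backbone is a single variance estimate combining three ingredients: the local Weyl law, which fixes the microlocal ensemble average; Egorov's theorem, which transports the symbol along the geodesic flow under quantum evolution; and ergodicity, which kills time averages of mean-zero observables.

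First I would reduce to the case where the symbol has mean zero by writing $a_0 := a - \bar{a}$ with $\bar{a} := \int_{S^*M} a\, d\mu$, so that the goal becomes
\[
\langle \mathrm{Op}(a_0) u_{n_k}, u_{n_k}\rangle \longrightarrow 0.
\]
Using a microlocal cutoff supported near the unit cotangent bundle together with Lemma \ref{cutoff} (which controls off-shell contributions on eigenfunctions), I may further assume $a_0$ is compactly supported near $\{|\xi|_x = 1\}$ without altering the diagonal matrix elements in the high-frequency limit. The central step is then the variance bound
\[
V(\lambda) := \frac{1}{N(\lambda)} \sum_{\lambda_n \leq \lambda} \bigl|\langle \mathrm{Op}(a_0) u_n, u_n\rangle\bigr|^2 \longrightarrow 0, \qquad \lambda \to \infty,
\]
where $N(\lambda) = \#\{n : \lambda_n \leq \lambda\}$.

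To establish $V(\lambda) \to 0$, I would introduce the classical time average $\langle a_0 \rangle_T := \tfrac{1}{T}\int_0^T a_0 \circ G_s\, ds$ along the geodesic flow $G_s$ on $S^*M$. By Egorov's theorem, conjugation of $\mathrm{Op}(a_0)$ by the wave propagator $U_s = e^{-is\sqrt{-\Delta_g}}$ is a pseudo-differential operator with principal symbol $a_0 \circ G_s$, so the Cesaro average in $s$ has principal symbol $\langle a_0 \rangle_T$; since $u_n$ is an eigenfunction of $U_s$ up to a unit-modulus phase, diagonal matrix elements are invariant under conjugation, yielding
\[
\langle \mathrm{Op}(a_0) u_n, u_n\rangle = \langle \mathrm{Op}(\langle a_0 \rangle_T) u_n, u_n\rangle + O_T(\lambda_n^{-1}).
\]
Squaring, using Cauchy--Schwarz and the sharp operator bound $\|\mathrm{Op}(b)\|_{L^2 \to L^2} \leq \sup|b| + O(\lambda^{-1})$, I obtain
\[
\bigl|\langle \mathrm{Op}(a_0) u_n, u_n\rangle\bigr|^2 \leq \langle \mathrm{Op}(|\langle a_0 \rangle_T|^2) u_n, u_n\rangle + o_T(1).
\]
Averaging over $\lambda_n \leq \lambda$ and applying the local Weyl law
\[
\frac{1}{N(\lambda)} \sum_{\lambda_n \leq \lambda} \langle \mathrm{Op}(b) u_n, u_n\rangle \longrightarrow \int_{S^*M} b\, d\mu \qquad \bigl(b \in C_0^\infty(T^*M)\bigr),
\]
gives $\limsup_{\lambda \to \infty} V(\lambda) \leq \int_{S^*M} |\langle a_0 \rangle_T|^2\, d\mu$. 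The von Neumann mean ergodic theorem in $L^2(S^*M, d\mu)$, applied to the unitary one-parameter group induced by the ergodic geodesic flow, forces $\|\langle a_0 \rangle_T\|_{L^2} \to 0$ as $T \to \infty$, and since $T$ is arbitrary the variance bound follows.

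To extract a single density-one subsequence valid for every homogeneous symbol of degree zero, I would fix a countable $C^0$-dense family $\{a^{(j)}\}$ of such symbols on $S^*M$. For each $j$, Chebyshev's inequality applied to the variance bound produces a density-one index set $\Lambda_j$ along which $\langle \mathrm{Op}(a^{(j)}) u_n, u_n\rangle \to \int_{S^*M} a^{(j)}\, d\mu$; a standard Cantor diagonal extraction produces a single density-one subsequence $\{u_{n_k}\}$ working simultaneously for every $a^{(j)}$, and the uniform operator bound extends the convergence to all homogeneous symbols of degree zero by density. The main obstacle is the careful bookkeeping in the Egorov-plus-variance step: one must ensure that the Egorov remainder at finite $T$, the commutator errors, and the off-shell residuals from the microlocal cutoff all decay in the correct order so that the iterated limits $\lim_{T \to \infty} \lim_{\lambda \to \infty}$ commute in the direction required. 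Everything else---the mean-zero reduction, the truncation near $\{|\xi|_x = 1\}$ via Lemma \ref{cutoff2}, and the diagonal extraction---is routine.
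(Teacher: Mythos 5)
The paper does not prove Theorem~\ref{thm:qe02} at all: it is stated as a known result, with the proof deferred to the cited works of Shnirelman, Colin de Verdi\`ere, and Zelditch. Your proposal is therefore not being compared against an argument in this paper but against the classical literature, and it reconstructs that literature faithfully. The mean-zero reduction, the microlocal truncation near $\{|\xi|_x=1\}$, the variance estimate obtained by combining Egorov's theorem and Cauchy--Schwarz with the local Weyl law, the passage to the limit $T\to\infty$ via the von Neumann mean ergodic theorem, and the final extraction of a density-one subsequence from a countable dense family of symbols are all exactly the steps of the standard Shnirelman--Zelditch--Colin de Verdi\`ere proof. One place where ``routine'' hides a real subtlety is the final diagonal extraction: a naive Cantor diagonal over the density-one sets $\Lambda_j$ does not by itself yield a density-one intersection (countably many density-one sets can intersect in a set of density zero), and the correct device is Zelditch's argument of nesting the bad sets so that their exceptional densities are summable before passing to a limit. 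Since you flag this step as needing bookkeeping and the device is well known, the proposal is a sound reconstruction of the proof the paper is implicitly relying on.
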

We now prove that if \eqref{homo} is true for all degree $0$ symbol, then it is also true for any symbol of finite order.
\begin{theorem}
Assume that $\{u_{n_k}\}$ is a sequence of eigenfunctions such that \eqref{homo} is true for all homogeneous degree $0$ symbol. Then for any $a \in S^m(M)$, we have
\begin{equation*}
\lim_{k \to \infty }\langle \mathrm{Op}(a)u_{n_k} , u_{n_k}\rangle = \int_{S^*M} a(x,\xi) dxd\xi.
\end{equation*}
\end{theorem}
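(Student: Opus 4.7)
The plan is to reduce a general finite-order symbol $a\in S^m(T^*M)$ to the degree-$0$ homogeneous case, exploiting the fact that an eigenfunction of $-\Delta_g$ with eigenvalue $\lambda^2$ is microlocally concentrated on the cosphere bundle $S^*M=\{|\xi|_x=1\}$ in the rescaled phase space. Consequently the matrix element $\langle\mathrm{Op}(a)u_{n_k},u_{n_k}\rangle$ only ``sees'' the restriction $a|_{S^*M}$ up to $o(1)$ errors, so it will suffice to replace $a$ by its degree-$0$ radial extension and to control the difference via the two cut-off Lemmas \ref{cutoff} and \ref{cutoff2}.

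\textbf{The decomposition.} Define $\tilde{a}(x,\xi):=a(x,\xi/|\xi|_x)$ on $T^*M\setminus\{0\}$, the unique degree-$0$ homogeneous extension of $a|_{S^*M}$. Fix radial cut-offs $\chi,\psi\in C^\infty([0,\infty))$ with $\chi$ compactly supported, $\chi\equiv 1$ on $[1-\epsilon,1+\epsilon]$ and $\mathrm{supp}(\chi)\subset[1-2\epsilon,1+2\epsilon]$, and with $\psi\equiv 0$ on $[0,1/4]$, $\psi\equiv 1$ on $[1/2,\infty)$, so that $\psi(|\xi|_x)\tilde{a}(x,\xi)$ is smooth on all of $T^*M$ and agrees with $\tilde{a}$ for $|\xi|_x\geq 1/2$. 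Then I would use the decomposition
\begin{equation*}
a=\bigl(1-\chi(|\xi|_x)\bigr)\,a+\chi(|\xi|_x)(a-\tilde{a})+\psi(|\xi|_x)\,\tilde{a}-(\psi-\chi)(|\xi|_x)\,\tilde{a}
\end{equation*}
and treat each of the four pieces separately.

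\textbf{Estimating each piece.} The first piece $(1-\chi(|\xi|_x))a\in S^m$ is supported away from $\{|\xi|_x=1\}$, so Lemma \ref{cutoff} (with $h=\lambda^{-1}$ and $E=1$) yields $\|\mathrm{Op}((1-\chi)a)u_{n_k}\|_{L^2}=O_N(\lambda^{-N})$ for every $N$. The second piece $\chi(a-\tilde{a})$ lies in $C_0^\infty(T^*M)$, is supported in the shell $\{1-2\epsilon\leq|\xi|_x\leq 1+2\epsilon\}$, and vanishes on $S^*M$ since $a=\tilde{a}$ there, so Lemma \ref{cutoff2} gives $\|\mathrm{Op}(\chi(a-\tilde{a}))u_{n_k}\|_{L^2}=O(\lambda^{-1})$. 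The fourth piece $(\psi-\chi)\tilde{a}$ is a bounded smooth symbol supported where $|\xi|_x\notin[1-\epsilon,1+\epsilon]$ (since $\psi=\chi=1$ on that set), and Lemma \ref{cutoff} again yields $O_N(\lambda^{-N})$. For the remaining piece $\psi\tilde{a}$, which is a smooth cut-off of the degree-$0$ homogeneous symbol $\tilde{a}$, I would invoke the hypothesis \eqref{homo} to conclude
\begin{equation*}
\lim_{k\to\infty}\langle\mathrm{Op}(\psi\tilde{a})u_{n_k},u_{n_k}\rangle=\int_{S^*M}\tilde{a}\,d\mu=\int_{S^*M}a\,d\mu,
\end{equation*}
the last equality because $\tilde{a}=a$ on $S^*M$ and $\psi(1)=1$. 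Adding the four contributions and using Cauchy--Schwarz on the three $o(1)$ operator norm bounds delivers the desired limit.

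\textbf{Main subtlety.} The only real wrinkle is that a ``degree-$0$ homogeneous symbol'' as defined in the paper is smooth only on $T^*M\setminus\{0\}$, so the quantization $\mathrm{Op}(\tilde{a})$ in the hypothesis \eqref{homo} implicitly depends on a choice of smooth cut-off away from $\xi=0$. I would verify that this dependence is harmless: two such cut-offs differ by a symbol supported near the origin, hence supported away from $\{|\xi|_x=1\}$, and one more application of Lemma \ref{cutoff} shows that the resulting matrix elements agree up to $O_N(\lambda^{-N})$. So $\psi\tilde{a}$ is a legitimate representative of $\tilde{a}$ against any eigenfunction of eigenvalue $\lambda^2$, and with this observation in hand the proof reduces to the bookkeeping described above, with the two cut-off lemmas doing all of the analytic work.
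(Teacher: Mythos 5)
Your proof is correct and follows the same underlying strategy as the paper: replace $a$ by the degree-$0$ homogeneous symbol $\tilde a(x,\xi)=a(x,\xi/|\xi|_x)$, then dispose of the error using Lemma \ref{cutoff} away from the cosphere bundle and Lemma \ref{cutoff2} on a shell near it, where $a-\tilde a$ vanishes. The one genuine difference is the decomposition. The paper writes $a=\tilde a + (a-\tilde a)(1-\chi(|\xi|_x^2)) + (a-\tilde a)\chi(|\xi|_x^2)$, which is more compact but slightly careless: $\tilde a$ is smooth only on $T^*M\setminus\{0\}$, so neither $\tilde a$ nor $(a-\tilde a)(1-\chi(|\xi|_x^2))$ is honestly a symbol in $S^m$ near the zero section, and Lemma \ref{cutoff} as stated does not literally apply to the latter. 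Your four-term decomposition, with the extra radial cutoff $\psi$, avoids ever quantizing a function singular at $\xi=0$, and your ``Main subtlety'' remark shows that the choice of $\psi$ is immaterial up to $O_N(\lambda^{-N})$. So you have tightened a small gap that the paper glosses over; the analytic content, the key lemmas, and the final limit computation are otherwise identical.
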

\begin{proof}
Fix $a \in S^m(M)$, and let $\tilde{a}(x,\xi):= a(x,\xi/|\xi|_x)$. Let $\chi \in C_0^\infty (\mathbb{R})$ be a function such that $\mathrm{supp}(\chi) \subset (1-2\delta, 1+2\delta)$ and that $\chi(y)=1$ if $y \in (1-\delta, 1+\delta)$ for some small fixed $\delta$. Then by Lemma \ref{cutoff}, we have
\begin{equation}\label{esttt1}
\langle \mathrm{Op}((a-\tilde{a})(1-\chi(|\xi|_x^2)))u_{n_k} , u_{n_k}\rangle = O_N(\lambda_{n_k}^{-N}).
\end{equation}
Observing that $a(x,\xi)-\tilde{a}(x,\xi)=0$ if $|\xi|_x=1$, we apply Lemma \ref{cutoff2} to have
\begin{equation}\label{esttt2}
\langle \mathrm{Op}((a-\tilde{a})\chi(|\xi|_x^2))u_{n_k} , u_{n_k}\rangle = O(\lambda_{n_k}^{-1}).
\end{equation}
Combining \eqref{esttt1} and \eqref{esttt2}, we conclude that
\[
\langle \mathrm{Op}(a)u_{n_k} , u_{n_k}\rangle=\langle \mathrm{Op}(\tilde{a})u_{n_k} , u_{n_k}\rangle+O(\lambda_{n_k}^{-1}),
\]
hence
\begin{align*}
\lim_{k \to \infty}\langle \mathrm{Op}(a)u_{n_k} , u_{n_k}\rangle&=\lim_{k \to \infty} \langle \mathrm{Op}(\tilde{a})u_{n_k} , u_{n_k}\rangle\\
&=\int_{S^*M} \tilde{a}(x,\xi) dxd\xi\\
&=\int_{S^*M} a(x,\xi) dxd\xi. \qedhere
\end{align*}
\end{proof}

\bibliographystyle{alpha}
\bibliography{bibfile}

\def\cprime{$'$}
\begin{thebibliography}{Tak77b}

\bibitem[AL70]{MR0268123}
A.~O.~L. Atkin and J.~Lehner.
\newblock Hecke operators on {$\Gamma _{0}(m)$}.
\newblock {\em Math. Ann.}, 185:134--160, 1970.

\bibitem[B{\'e}r77]{jzBe}
Pierre~H. B{\'e}rard.
\newblock On the wave equation on a compact {R}iemannian manifold without
  conjugate points.
\newblock {\em Math. Z.}, 155(3):249--276, 1977.

\bibitem[BGT07]{bgt}
N.~Burq, P.~G{\'e}rard, and N.~Tzvetkov.
\newblock Restrictions of the {L}aplace-{B}eltrami eigenfunctions to
  submanifolds.
\newblock {\em Duke Math. J.}, 138(3):445--486, 2007.

\bibitem[Bin04]{xu}
Xu~Bin.
\newblock Derivatives of the spectral function and {S}obolev norms of
  eigenfunctions on a closed {R}iemannian manifold.
\newblock {\em Ann. Global Anal. Geom.}, 26(3):231--252, 2004.

\bibitem[BR13]{br}
J.~Bourgain and Z.~Rudnick.
\newblock Nodal intersections and {$L^p$} restriction theorems on the torus.
\newblock {\em to appear Israel J. Math.}, 2013.

\bibitem[Bur05]{burq}
N.~Burq.
\newblock Quantum ergodicity of boundary values of eigenfunctions: a control
  theory approach.
\newblock {\em Canad. Math. Bull.}, 48(1):3--15, 2005.

\bibitem[CdV85]{cd22}
Y.~Colin~de Verdi{\`e}re.
\newblock Ergodicit\'e et fonctions propres du laplacien.
\newblock {\em Comm. Math. Phys.}, 102(3):497--502, 1985.

\bibitem[CH53]{ch53}
Richard Courant and David Hilbert.
\newblock {\em Methods of mathematical physics. {V}ol. {I}}.
\newblock Interscience Publishers, Inc., New York, N.Y., 1953.

\bibitem[CTH15]{cth}
H.~Christianson, J.~A. Toth, and A.~Hassell.
\newblock Exterior mass estimates and {$L^2$} restriction bounds for {N}eumann
  data along hypersurfaces.
\newblock {\em to appear Int. Math. Res. Not.}, 2015.

\bibitem[CTZ13]{ctz}
H.~Christianson, J.~A. Toth, and S.~Zelditch.
\newblock Quantum ergodic restriction for cauchy data: Interior {QUE} and
  restricted {QUE}.
\newblock {\em MRL Vol. 20 (3), 1-11 (arXiv:1205.0286)}, 2013.

\bibitem[Don92]{d}
Rui-Tao Dong.
\newblock Nodal sets of eigenfunctions on {R}iemann surfaces.
\newblock {\em J. Differential Geom.}, 36(2):493--506, 1992.

\bibitem[Dur10]{durrett2010probability}
Rick Durrett.
\newblock {\em Probability: theory and examples}.
\newblock Cambridge Series in Statistical and Probabilistic Mathematics.
  Cambridge University Press, Cambridge, fourth edition, 2010.

\bibitem[DZ13]{dz}
Semyon Dyatlov and Maciej Zworski.
\newblock Quantum ergodicity for restrictions to hypersurfaces.
\newblock {\em Nonlinearity}, 26(1):35--52, 2013.

\bibitem[GRS13]{GRS}
Amit Ghosh, Andre Reznikov, and Peter Sarnak.
\newblock Nodal {D}omains of {M}aass {F}orms {I}.
\newblock {\em Geom. Funct. Anal.}, 23(5):1515--1568, 2013.

\bibitem[GRS14]{GRS2}
Amit Ghosh, Andre Reznikov, and Peter Sarnak.
\newblock Nodal {D}omains of {M}aass {F}orms {II}.
\newblock {\em Preprint}, 2014.

\bibitem[JN99]{crit}
Dmitry Jakobson and Nikolai Nadirashvili.
\newblock Eigenfunctions with few critical points.
\newblock {\em J. Differential Geom.}, 53(1):177--182, 1999.

\bibitem[Jun13]{jung3}
J.~Jung.
\newblock Quantitative {Q}uantum {E}rgodicity and the nodal lines of
  {M}aass-{H}ecke cusp forms.
\newblock {\em arXiv:1301.6211}, 2013.

\bibitem[JZ13]{JZ1}
J.~Jung and S.~Zelditch.
\newblock Number of nodal domains and singular points of eigenfunctions of
  negatively curved surfaces with an isometric involution.
\newblock {\em to appear J. Differential Geom.}, 2013.

\bibitem[Lew77]{lewy}
Hans Lewy.
\newblock On the minimum number of domains in which the nodal lines of
  spherical harmonics divide the sphere.
\newblock {\em Comm. Partial Differential Equations}, 2(12):1233--1244, 1977.

\bibitem[Lin06]{lin06}
E.~Lindenstrauss.
\newblock Invariant measures and arithmetic quantum unique ergodicity.
\newblock {\em Ann. of Math.}, 163:165--219, 2006.

\bibitem[Mag13]{mm}
M.~Magee.
\newblock Arithmetic, zeros, and nodal domains on the sphere.
\newblock {\em arXiv:1310.7977}, 2013.

\bibitem[RS94]{rs94}
Ze{\'e}v Rudnick and Peter Sarnak.
\newblock The behaviour of eigenstates of arithmetic hyperbolic manifolds.
\newblock {\em Comm. Math. Phys.}, 161(1):195--213, 1994.

\bibitem[{\v{S}}ni74]{snirel}
A.~I. {\v{S}}nirel{\cprime}man.
\newblock Ergodic properties of eigenfunctions.
\newblock {\em Uspehi Mat. Nauk}, 29(6(180)):181--182, 1974.

\bibitem[Sou10]{so10}
Kannan Soundararajan.
\newblock Quantum unique ergodicity for {${\rm SL}_2(\mathbb{
  Z})\backslash\mathbb{ H}$}.
\newblock {\em Ann. of Math. (2)}, 172(2):1529--1538, 2010.

\bibitem[Ste25]{st}
Antonie Stern.
\newblock {Bemerkungen \"uber asymptotisches Verhalten von Eigenwerten und
  Eigenfunktionen. Math.- naturwiss. Diss.}
\newblock {G\"ottingen, 30 S (1925).}, 1925.

\bibitem[Tak77a]{MR0429744}
Kisao Takeuchi.
\newblock Arithmetic triangle groups.
\newblock {\em J. Math. Soc. Japan}, 29(1):91--106, 1977.

\bibitem[Tak77b]{MR0463116}
Kisao Takeuchi.
\newblock Commensurability classes of arithmetic triangle groups.
\newblock {\em J. Fac. Sci. Univ. Tokyo Sect. IA Math.}, 24(1):201--212, 1977.

\bibitem[TZ13]{tz1}
John~A. Toth and Steve Zelditch.
\newblock Quantum ergodic restriction theorems: manifolds without boundary.
\newblock {\em Geom. Funct. Anal.}, 23(2):715--775, 2013.

\bibitem[Zel87]{zeld1}
Steven Zelditch.
\newblock Uniform distribution of eigenfunctions on compact hyperbolic
  surfaces.
\newblock {\em Duke Math. J.}, 55(4):919--941, 1987.

\bibitem[Zel92]{zcuz}
Steve Zelditch.
\newblock Kuznecov sum formulae and {S}zeg{\H o}\ limit formulae on manifolds.
\newblock {\em Comm. Partial Differential Equations}, 17(1-2):221--260, 1992.

\bibitem[Zwo12]{zss}
Maciej Zworski.
\newblock {\em Semiclassical analysis}, volume 138 of {\em Graduate Studies in
  Mathematics}.
\newblock American Mathematical Society, Providence, RI, 2012.

\end{thebibliography}
\end{document}